\newtheorem{theorem}{Theorem}[section]
\newtheorem{lemma}[theorem]{Lemma}
\newtheorem{remark}[theorem]{Remark}
\theoremstyle{definition}
\newcommand{\Irr}{{\mathrm {Irr}}}
\newcommand{\cd}{{\mathrm {cd}}}
\newcommand{\cod}{{\mathrm {cod}}}
\newcommand{\Aut}{{\mathrm {Aut}}}
\newcommand{\kernel}{{\mathrm {ker}}}
\newcommand{\PSL}{{\mathrm {PSL}}}
\newcommand{\SL}{{\mathrm {SL}}}
\newcommand{\Syl}{\mathrm{Syl}}
\newcommand{\orb}{\mathrm{orb}}
\begin{document}

\title{On the multiplicities of the character codegrees of finite groups}

\author[Z. Akhlaghi et al.]{Zeinab Akhlaghi}
\address{Zeinab Akhlaghi, Faculty of Math. and Computer Sci., \newline Amirkabir University of Technology (Tehran Polytechnic), 15914 Tehran, Iran.\newline
School of Mathematics,
Institute for Research in Fundamental Science(IPM)
P.O. Box:19395-5746, Tehran, Iran.}
\email{z\_akhlaghi@aut.ac.ir}

\author[]{Mehdi Ebrahimi}
\address{Mehdi Ebrahimi,  School of Mathematics,
	Institute for Research in Fundamental Sciences (IPM),
	P. O. Box 19395-5746, Tehran, Iran.}
\email{m.ebrahimi.math@ipm.ir}

\author[]{Maryam Khatami}
\address{Maryam Khatami, Department of Pure Mathematics, 
	Faculty of Mathematics and Statistics, 
	University of Isfahan,
	Isfahan, 81746-73441, Iran.
	}
\email{m.khatami@sci.ui.ac.ir}

\thanks{
The first author  is supported by a grant from IPM (No. 1400200028). The  second  author  is   supported by a  grant from the School of Mathematics, Institute for Research in
Fundamental Sciences (IPM).}

\subjclass[2000]{20C15}

\begin{abstract} 
	Let  $ G $   be a finite group and  $ \chi $ be  an irreducible character of      $ G $, the number      $ \cod(\chi) = |G: \kernel(\chi)|/\chi(1) $           is called  the codegree of     $ \chi $. Also,   $ \cod(G) = \{ \cod(\chi) \  | \ \chi \in \Irr(G) \} $. For $d\in\cod(G)$, the multiplicity of $d$ in $G$, denoted by $m'_G(d)$, is the number of irreducible characters of $G$ having codegree $d$. A finite group $G$ is called a  $T'_k$-group for some integer $k\geq 1$, if there exists $d_0\in\cod(G)$ such that $m'_G(d_0)=k$ and for every $d\in\cod(G)-\{d_0\}$, we have $m'_G(d)=1$. In this note we characterize finite $T'_k$-groups completely, where $k\geq 1$ is an integer.    
\end{abstract}
\keywords{finite group, codegree, character degree}

\maketitle
\section{Introduction}
$\noindent$ Let $G$ be a finite group. Also let ${\rm cd}(G)$ be the set of all irreducible character degrees of $G$, that is,
${\rm cd}(G)=\{\chi(1)|\;\chi \in \Irr(G)\} $, where ${\rm Irr}(G)$ is the set of all complex irreducible characters of $G$.
For a character $\chi$ of $G$, the codegree of $\chi$ is defined as ${\rm cod}(\chi):=|G:{\rm ker}(\chi)|/\chi(1)$  (see \cite{Qian}).
Set ${\rm cod}(G):=\{{\rm cod}(\chi)|\; \chi \in {\rm Irr}(G)\}$. It is well known that ${\rm cod}(G)$ may be used to provide information on the structure of the group $G$ (see for instance \cite{DL},  \cite{group} and \cite{Qian}).

For $d\in {\rm cd}(G)$, the multiplicity of degree $d$, denoted by $m_G(d)$, is the number of irreducible characters of $G$ having the same degree $d$, i.e., $m_G(d)=|\{\chi \in {\rm Irr}(G)|\chi(1)=d\}|$.  Many people considered the relations between the multiplicities of irreducible character degrees of finite groups and the structure of the groups. Let ${\rm Irr_1}(G)$ be the set of nonlinear irreducible characters in ${\rm Irr}(G)$ and ${\rm cd}_1(G)$ be the set of degrees of the characters in ${\rm Irr}_1(G)$. If for a finite group $G$, there exists a nonnegative integer $n$ such that $|{\rm cd}_1(G)|=|{\rm Irr}_1(G)|-n$, then $G$ is called a $D_n$-group. In \cite{D_0}, Berkovich et al. gave the classification of $D_0$-groups. Then, Berkovich and Kazarin in \cite{d1} and \cite{berkovich} classified all $D_1$-groups. Also $D_2$-groups have been classified by Liu and Lu (see \cite{K} and \cite{d2}).

Inspired by the definition of $D_n$-groups,  we define new families of finite groups called $D^\prime_n$-groups, that are the groups $G$ satisfying  $|{\rm cod}(G)|=|{\rm Irr}(G)|-n$, for some nonnegative integer $n$.  The second author classified $D'_0$-groups, and showed that they are $C_2$ and $S_3$ (see \cite{ebrahimi}).

A finite group $G$ is said to be a $T_k$-group for some integer $k\geq 1$, if there exists a nontrivial degree
$d_0 \in {\rm cd}(G)$ such that $m_G(d_0)=k$ and that for every $d\in {\rm cd}(G)-\{1,d_0\}$, the multiplicity of $d$
in $G$ is trivial, that is, $m_G(d)=1$. Tong-Viet in \cite{hung} showed that if $G$
is a non-solvable $T_k$-group for some integer $k\geq 1$, then $k=2$ and $G$ is isomorphic to
${\rm PSL}_2(5)$ or ${\rm PSL}_2(7)$. Analogously, one can define a $T'_k$-group for some integer $k\geq 1$ based on the set ${\rm cod}(G)$, that is a finite group $G$ in which there exists
$d_0 \in {\rm cod}(G)$ such that $m'_G(d_0)=k$
and for every $d\in{\rm cod}(G)$$-\{d_0\}$, we have
$m'_G(d)=1$. In this paper we wish to study $T'_k$-groups, for some integer $k\geq 1$. In fact, we characterize these groups completely and we prove the following theorem:
\bigskip

\textbf{Main Theorem.}\textit{ Let $G$ be a finite $T_k'$-group, for some integer $k\geq 1$. Then one of the following occurs:
	\begin{itemize}
		\item[(a)]  $G$ is non-solvable if and only if  $k=2$ and $G\cong \PSL_2(5)$ or $G\cong \PSL_2(7)$.
		\item[(b)] $G$ is solvable if and only if one of the  following holds:
		\begin{itemize}
			\item[(1)] $G\cong C_4$; $S_4$; Suz$(2)\cong C_5\rtimes C_4$ or Dic$_{12}\cong$SmallGroup(12,1). Moreover $k=2$.
			\item[(2)] $G\cong C_2\times S_3$; $D_{18}$ or Frobenius group $C_3^2\rtimes Q_8$. Moreover $k=3$.
			\item[(3)] $G$ is an elementary abelian $p$-group, for some prime $p$, or $G$ is an extra-special $2$-group. In particular $k=|G/G'|-1$.
			\item[(4)] $G$ is isomorphic to the Frobenius group $C_2^{\beta}\rtimes C_{2^{\beta}-1}$, for some integer $\beta$,  where $p=2^{\beta}-1$ is prime. Moreover $k=p-1$.
			\item[(5)] $G$ is isomorphic to the Frobenius group $C_p^{\beta}\rtimes C_2$, for some prime $p$ and  integer $\beta$.  Moreover $k=(p^{\beta}-1)/2$.
		\end{itemize}
	\end{itemize}
}

If $N\unlhd G$ and $\theta\in {\rm Irr}(N)$, then  ${I}_G(\theta)$ denotes the inertia group of $\theta$ in $G$, and ${\rm Irr}(G|\theta)$ denotes the set of all irreducible constituents of $\theta^G$. Moreover, $\Irr(G|N)=\Irr(G)-\Irr(G/N)$. By a rational group, we mean a group whose all irreducible characters are rational-valued. By $n_p$ we  mean, the $p$-part of $n$, where $n$ is a natural number while, by  $n_q(G)$ we mean  the number of Sylow $q$-subgroups of $G$. For the rest of the notations, we follow \cite{isaac}.

\section{ Preliminaries}
In this section, we recall some known results of irreducible characters which will be needed throughout the proofs.

\begin{remark}\label{orbit}
	Let $K$ be a normal abelian subgroup of $G$. Then $|G:I_G(\lambda)|\leq |K|-1$, for each $\lambda\in \Irr(K)$.
\end{remark}
The above remark follows by the fact that there is a correspondence between   $G$-orbits of $\Irr(K)$ and $G$-conjugacy classes of $K$. We use this remark, several times through the proofs.

\begin{lemma}	\label{size Nq}(see \cite[Lemma 2.2]{zzk})
	Let   $ G $     be a finite group acting on a module   $ M $  over a finite field and $ 1\in S \subseteq M $ be a union of some   $ G $-orbits of   $ M $. Also, assume   $ q $ is  a prime divisor of the order of $ G $
	such that for every              $ v \in M \setminus S$,    $ C_{G}(v) $  contains a Sylow    $ q $-subgroup of   $ G $  as a normal subgroup.
	Assume
	for each         $ 1\not =x \in S $,        $ q  $ divides  $ |\orb_{G}(x)| $. Then,    $ |M| - |S| = n_{q}(G)(|C_{M}(Q)|-1) $  
	for some     $ Q \in \Syl_{q}(G) $.
\end{lemma}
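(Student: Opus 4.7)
The plan is to partition $M\setminus\{1\}$ according to which Sylow $q$-subgroup of $G$ centralizes each element, and then to count. The crucial observation is that the two hypotheses together imply that, for $v\in M\setminus\{1\}$, the relation $Q\leq C_G(v)$ holds for some $Q\in\Syl_q(G)$ if and only if $v\in M\setminus S$, and in that case such a $Q$ is unique.

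First I would verify this uniqueness. For $v\in M\setminus S$, the hypothesis supplies a Sylow $q$-subgroup $Q$ of $G$ with $Q\trianglelefteq C_G(v)$. Since $|Q|=|G|_q\geq |C_G(v)|_q$, $Q$ is automatically a Sylow $q$-subgroup of $C_G(v)$ as well, and being normal it is the \emph{unique} Sylow $q$-subgroup of $C_G(v)$. Consequently, any Sylow $q$-subgroup of $G$ contained in $C_G(v)$ equals $Q$; call it $Q_v$.

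Next I would rule out elements of $S\setminus\{1\}$. For such $x$, the hypothesis $q\mid |\orb_G(x)|=|G:C_G(x)|$ forces $|C_G(x)|_q<|G|_q$, so no Sylow $q$-subgroup of $G$ fits inside $C_G(x)$. Since $1\in C_M(Q)$ for every $Q\in\Syl_q(G)$, combining the two steps gives the disjoint decomposition
\[
M\setminus S \;=\; \bigsqcup_{Q\in\Syl_q(G)} \bigl(C_M(Q)\setminus\{1\}\bigr),
\]
where the disjointness is precisely the uniqueness of $Q_v$ just established, and the containment $\supseteq$ follows because $v\in C_M(Q)\setminus\{1\}$ forces $Q\leq C_G(v)$, ruling out $v\in S$ by the previous paragraph.

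Finally, all Sylow $q$-subgroups of $G$ are $G$-conjugate, and since $G$ acts on $M$, conjugation $Q\mapsto gQg^{-1}$ translates $C_M(Q)$ into $g\cdot C_M(Q)$. Hence $|C_M(Q)|$ is independent of the choice of $Q\in\Syl_q(G)$, and counting both sides of the decomposition yields
\[
|M|-|S| \;=\; n_q(G)\bigl(|C_M(Q)|-1\bigr).
\]
The main subtlety is the normality of $Q$ inside $C_G(v)$, which I expect to be the crux of the argument: without it, some $v\in M\setminus S$ could be centralized by several Sylow $q$-subgroups simultaneously, and the decomposition would over-count.
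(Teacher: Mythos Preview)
Your argument is correct: the key decomposition $M\setminus S=\bigsqcup_{Q\in\Syl_q(G)}(C_M(Q)\setminus\{1\})$ follows exactly as you describe, with normality of $Q$ in $C_G(v)$ guaranteeing disjointness and the orbit-divisibility hypothesis ruling out nontrivial elements of $S$ from any $C_M(Q)$. Note, however, that the paper does not supply its own proof of this lemma; it is quoted verbatim from \cite[Lemma~2.2]{zzk}, so there is no in-paper argument to compare against. Your double-counting approach is the natural one and is presumably what the cited source does as well.
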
	







\begin{lemma}  \label{ somelemmas}(see \cite[Lemma 2.3]{hung})
	Let $S$ be a finite non-abelian simple group.
	
	(1) If $S$ is a sporadic simple group, the Tits group or an alternating group of degree at least 7, then $S$ has two nontrivial irreducible characters, with distinct degrees and both extendible to $\Aut(S)$.
	
	(2) If $S$ is a simple group of Lie type in characteristic $p$ and $S\not \cong$ $^2F_4(2)'$, then the Steinberg character of $ S $, denoted by $St_S$, of degree $|S|_p$, is extendible to $\Aut(S)$. Furthermore, if $S\not\cong \PSL_2(3^f)$ for some integer $f\geq 2$, then $S$ possesses an irreducible character $\theta$ such that $\theta(1)\not=|S|_p$ and $\theta$ also extends to $\Aut(S)$.
	
	
	
\end{lemma}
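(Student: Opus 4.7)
The plan is to split the argument along the Classification of Finite Simple Groups and handle each family as prescribed by the statement, relying on character-table inspection for the small cases and structural facts about automorphisms for the infinite families.

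For part (1), when $S$ is sporadic or the Tits group, $|\Out(S)|\le 2$, so by the \textsc{Atlas} I can directly read off two irreducible characters of distinct degrees that are fixed by $\Out(S)$; extendibility of an $\Out(S)$-invariant character to $\Aut(S)$ then follows because the relevant quotient is cyclic (of prime order) and so the cohomological obstruction vanishes. When $S=A_n$ with $n\ge 7$, one has $\Aut(A_n)=S_n$, and the branching rule for symmetric groups says that $\chi^\lambda\!\downarrow_{A_n}$ is irreducible precisely when $\lambda$ is not self-conjugate; in that case it extends back to $\chi^\lambda$ on $S_n$. I would then exhibit two non-self-conjugate partitions of $n$ with distinct degrees, e.g. $\lambda=(n-1,1)$ and $\mu=(n-2,1,1)$, yielding degrees $n-1$ and $\binom{n-1}{2}$, both unequal and unequal to one another for $n\ge 7$.

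For part (2), extendibility of $St_S$ is classical: write $St_S=\sum_{J\subseteq\Delta}(-1)^{|J|}1_{P_J}^{S}$, an alternating sum over parabolic subgroups; this expression is invariant under any automorphism permuting the simple roots, so $St_S$ is $\Aut(S)$-invariant, and as $\Aut(S)/S$ is an extension of inner-diagonal, field, and graph automorphisms (each contributing a cyclic quotient), the invariant character extends. For the second character, if $S$ has semisimple rank at least $2$ I would take another unipotent character (for instance a reflection-type character, whose degree is a proper product of cyclotomic polynomials in $q$ and hence is not $|S|_p$) and invoke the general extendibility results for unipotent characters going back to Lusztig, together with the cyclicity argument above. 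The rank-one groups $\PSL_2(q)$ require a hand computation from the character table of $\PGL_2(q)\rtimes\langle\phi\rangle$ to locate, among the discrete-series or principal-series characters of degree $q\pm 1$, one that is both $\PGL_2$-stable and field-stable, whence it extends.

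The main obstacle is precisely this rank-one analysis, which is also where the exceptional family $\PSL_2(3^f)$ with $f\ge 2$ enters: the outer-diagonal action of $\PGL_2(3^f)/\PSL_2(3^f)$ combined with the field-automorphism action on the $(q\pm 1)/2$ discrete/principal-series orbits conspires so that no non-Steinberg non-trivial character survives with an extension to the full $\Aut(S)$. Verifying that this obstruction is the only one—i.e.\ that for every other $\PSL_2(q)$ a suitable character does extend—requires a careful case split on the parity of $p$, the residue of $q\bmod 4$, and the size of the field, which is the most delicate part of the whole argument.
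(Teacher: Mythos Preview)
The paper does not prove this lemma at all: it is quoted verbatim from Tong-Viet \cite[Lemma~2.3]{hung} and used as a black box throughout. There is therefore no ``paper's own proof'' to compare your proposal against.

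Your outline is nonetheless the standard route to such a result and is broadly correct. A few places where the sketch would need tightening before it counts as a proof:
\begin{itemize}
\item The extension-through-cyclic-quotients argument for groups of Lie type is not quite as clean as you suggest. The outer automorphism group $\Out(S)$ need not decompose as a chain of cyclic normal factors (for type $D_4$ the graph part is $S_3$), and one must check that the chosen character remains invariant at each stage of whatever chain you use, not merely under the full $\Aut(S)$. In practice this is handled by citing results of Lusztig and Malle on the $\Aut(S)$-action on unipotent characters rather than by an ad hoc cyclic-extension argument.
\item ``Reflection-type character'' is too vague to pin down a specific $\theta$ with $\theta(1)\neq |S|_p$; one typically names an explicit unipotent character (or a semisimple character in small rank) and quotes the literature for its invariance and extendibility.
\item You correctly isolate the rank-one case as the crux, but the Suzuki and small Ree groups ${}^2B_2(q)$ and ${}^2G_2(q)$ also require separate attention, not just $\PSL_2(q)$.
\end{itemize}
None of these is a fatal gap; they are the expected bookkeeping for a result that the present paper is content simply to cite.
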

\begin{lemma}(see \cite[ Lemma 5]{ble}) \label{d2}
	Let $ N $ be a minimal normal subgroup of $ G $ such that $ N =
	S_{1} \times  \dots \times S_{t} $, 	 where $ S_i \cong S $  is a non-abelian simple group. If $ \sigma \in \Irr(S) $ extends to $ \Aut(S) $, then $ \sigma \times \dots \times \sigma \in \Irr(N) $ extends to $ G $.
\end{lemma}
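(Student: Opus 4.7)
The plan is to build the extension of $\theta := \sigma \times \cdots \times \sigma \in \Irr(N)$ in two stages: first extend $\theta$ from $N$ to the full automorphism group $\Aut(N) = \Aut(S) \wr S_t$, then pull that extension back to $G$ along the conjugation homomorphism. The crucial structural input is that each $S_i$ is non-abelian simple, which gives $\Center(N) = 1$; this forces $N \cap \Centralizer_G(N) = 1$, ensures the conjugation map $\phi : G \to \Aut(N)$ has kernel exactly $\Centralizer_G(N)$, and identifies $\phi(N)$ with $\mathrm{Inn}(N) \cong N$ inside $\Aut(N)$.

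For the first stage, let $\rho : \Aut(S) \to \mathrm{GL}(V)$ afford an extension $\tilde\sigma$ of $\sigma$, which exists by hypothesis. I would extend $\theta$ to $\Aut(N)$ by the standard tensor-and-permute recipe on $W := V^{\otimes t}$: the base group $\Aut(S)^t$ acts diagonally, with each entry $a_i$ acting through $\rho(a_i)$ on the $i$-th tensor slot, while the top group $S_t$ acts by permuting the tensor slots. A direct check on simple tensors shows this obeys the wreath-product multiplication law, so it defines a genuine representation of $\Aut(S) \wr S_t$; by construction its restriction to $N = S^t$ affords precisely $\sigma \times \cdots \times \sigma = \theta$. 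Call the resulting character $\hat\sigma \in \Irr(\Aut(N))$.

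For the second stage, set $\bar G := \phi(G) \leq \Aut(N)$ and consider $\hat\sigma|_{\bar G}$. Its further restriction to the normal subgroup $\mathrm{Inn}(N) \trianglelefteq \bar G$ equals $\theta$ (via the identification $\mathrm{Inn}(N) \cong N$), which is irreducible. Thus $\hat\sigma|_{\bar G}$ has degree $\theta(1)$ and an irreducible restriction to $\mathrm{Inn}(N)$, forcing $\hat\sigma|_{\bar G}$ itself to be irreducible and hence an extension of $\theta$ from $\mathrm{Inn}(N)$ to $\bar G$. Finally, inflating along the surjection $G \twoheadrightarrow G/\Centralizer_G(N) = \bar G$ yields an irreducible $\chi \in \Irr(G)$. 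Because $N \cap \Centralizer_G(N) = 1$, the composition $N \hookrightarrow G \twoheadrightarrow \bar G$ implements the isomorphism $N \cong \mathrm{Inn}(N)$, so $\chi|_N = \theta$, completing the extension.

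The one genuinely delicate point is the wreath-product construction in the first stage—verifying that the diagonal action of $\Aut(S)^t$ interacts correctly with the permutation action of $S_t$ on simple tensors to produce a single homomorphism $\Aut(S) \wr S_t \to \mathrm{GL}(V^{\otimes t})$. Everything else is a formal consequence of $\Center(N) = 1$: the standard identification $\Aut(S^t) = \Aut(S) \wr S_t$, the fact that $\bar G$ sits inside this wreath product above $\mathrm{Inn}(N)$, and the principle that an irreducible restriction to a normal subgroup forces irreducibility of the enveloping character.
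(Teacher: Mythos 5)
The paper does not prove this lemma at all: it is quoted verbatim from the cited reference (Bianchi--Chillag--Lewis--Pacifici, Lemma~5), so there is no internal proof to compare against. Your argument is correct and is in fact essentially the proof given in that source: use $\Center(N)=1$ to get $N\cap\Centralizer_G(N)=1$ and embed $G/\Centralizer_G(N)$ into $\Aut(N)\cong\Aut(S)\wr S_t$ over $\mathrm{Inn}(N)\cong N$, extend $\sigma\times\cdots\times\sigma$ to the wreath product by the tensor-and-permute representation, restrict to the image of $G$ (irreducibility being forced by the irreducible restriction to $\mathrm{Inn}(N)$), and inflate back to $G$. The only cosmetic quibbles are that irreducibility of a restriction to any subgroup, normal or not, already forces irreducibility of the ambient character, and that minimal normality of $N$ is not actually needed beyond $N\trianglelefteq G$ and the stated direct-product structure; neither affects the validity of the proof.
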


\begin{lemma}(see \cite[Theorem E]{4 D2}) \label{distinct}
	Let $ N \trianglelefteq G $, where $ N $ is a $ p $-group  and $ G/N $ is  solvable. Let
	$ \theta \in  \Irr(N) $ be  invariant in $ G $, and assume that   the members of $ \Irr(G |\theta ) $ have distinct degrees. Then, $ G $ is a $ p $-group and 	
	$ \Irr(G |\theta) = \{\chi\} $.  In particular,  $\chi(1)= \sqrt{|G|/|N|}$. 
\end{lemma}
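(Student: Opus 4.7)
My plan is to combine a character-triple reduction with an induction on $|G:N|$. By the theory of isomorphic character triples, I would replace $(G,N,\theta)$ by an isomorphic triple $(G^*,N^*,\theta^*)$ in which $N^*$ is a cyclic central $p$-subgroup of $G^*$ and $\theta^*$ is linear and faithful. This preserves $G/N\cong G^*/N^*$ (still solvable) and all inertia groups, and gives a degree-ratio-preserving bijection $\Irr(G|\theta)\to\Irr(G^*|\theta^*)$ with $\chi^*(1)=\chi(1)/\theta(1)$; both the distinct-degree hypothesis and the desired conclusion (up to a factor $\theta(1)$) transfer. Hence I may assume throughout that $N$ is a central cyclic $p$-subgroup of $G$ and that $\theta(1)=1$.

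I would then induct on $|G:N|$, the base $G=N$ being immediate. Assume $G>N$ and, for contradiction, that some minimal normal subgroup $L/N$ of $G/N$---necessarily elementary abelian of prime order $\ell$ by solvability---satisfies $\ell\neq p$. Since $\gcd(|N|,|L/N|)=1$, Schur--Zassenhaus produces a complement $R$ of $N$ in $L$, and centrality of $N$ in $L$ makes the product direct, $L=N\times R$; thus $R$ is the unique Sylow $\ell$-subgroup of $L$, characteristic in $L\trianglelefteq G$, and so $R\trianglelefteq G$. The trivial extension $\hat\theta:=\theta\otimes 1_R\in\Irr(L)$ is $G$-invariant, and inflation gives a degree-preserving bijection $\Irr(G|\hat\theta)\cong\Irr(G/R|\bar\theta)$, where $\bar\theta$ is the image of $\theta$ in $NR/R\cong N$. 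Applying the inductive hypothesis to $(G/R,NR/R,\bar\theta)$ (distinct degrees are inherited through inflation), $G/R$ is a $p$-group and $\Irr(G/R|\bar\theta)=\{\chi_0\}$ with $\chi_0(1)^2=|G:NR|$. I would then compare
\[
\sum_{\chi\in\Irr(G|\theta)}\chi(1)^2=|G:N|\quad\text{with}\quad \chi_0(1)^2=|G:L|=|G:N|/|R|,
\]
and via orbit analysis of the action of the $p$-group $G/R$ on $\Irr(R)\setminus\{1_R\}$ combined with Clifford's theorem, exhibit two distinct members of $\Irr(G|\theta)$ of equal degree, contradicting distinctness. This forces $R=1$, so $\ell=p$; iterating shows $G$ itself is a $p$-group.

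Once $G$ is a $p$-group, each $\chi(1)$ is a power of $p$ dividing $|G:N|=p^m$, and the identity $\sum_\chi\chi(1)^2=p^m$ expresses $p^m$ as a sum of distinct squares $p^{2a_1}<\cdots<p^{2a_k}$. Dividing by $p^{2a_1}$ yields $1+p^{2(a_2-a_1)}+\cdots=p^{m-2a_1}$, whose left side is $\equiv 1\pmod p$ whenever $k\geq 2$; hence $k=1$, $m$ is even, and $\chi(1)=p^{m/2}=\sqrt{|G:N|}$, which matches the stated formula after undoing the character-triple reduction.

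The main obstacle is the collision step that forces $R=1$: the mere existence of characters in $\Irr(G|\theta)$ lying over some non-trivial $\lambda\in\Irr(R)$ does not by itself give a duplication of degrees. One must exploit that the $p$-group $G/R$ acts on the $\ell$-group $\Irr(R)$---so non-trivial orbits have $p$-power length coprime to $\ell$---and combine this with the Clifford-correspondence description of $\chi(1)$ as (orbit length)$\times$(inertia-group character degree) to pinpoint two distinct members of $\Irr(G|\theta)$ taking the same value.
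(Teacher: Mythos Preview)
The paper does not prove this lemma at all: it is quoted from \cite[Theorem~E]{4 D2} (Berkovich--Isaacs--Kazarin) and used as a black box in Sections~3 and~4. There is therefore no argument in the paper against which to compare your sketch.

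Regarding the sketch itself, the character-triple reduction to a central cyclic $p$-subgroup and the final $p$-adic counting argument (distinct $p$-power squares summing to $p^m$ force a single term) are both standard and correct. The genuine gap is precisely the one you flag as ``the main obstacle'': you never actually produce the promised collision that rules out a $p'$-chief factor $L/N$. Knowing that $G/R$ is a $p$-group and that $\chi_0(1)^2=|G:L|$ is not enough. Your proposed orbit analysis tells you only that $G$-orbits on $\Irr(R)\setminus\{1_R\}$ have $p$-power lengths, and Clifford theory tells you $\chi(1)=|G:I_G(\lambda)|\cdot\psi(1)$ with $\psi(1)$ a $p$-power (by It\^o, since $R$ is abelian normal in $I_G(\lambda)$ and $I_G(\lambda)/R$ is a $p$-group). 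So every $\chi(1)$ is a $p$-power --- but this does \emph{not} immediately force a repeat. Indeed, the numerics $\sum_{\chi}\chi(1)^2=|G:N|=|G:L|\cdot|R|$ are compatible with, say, $|R|=5$, $p=2$, and exactly two characters of degrees $2^{m/2}$ and $2^{m/2+1}$; ruling out such configurations requires controlling the sets $\Irr(I_G(\lambda)\mid\theta\otimes\lambda)$ for each orbit, which is where the substance of the Berkovich--Isaacs--Kazarin argument lies. As written, your plan stops short of that and the proof is incomplete.
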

\begin{lemma}\label{abeliana}
	Let $G$ be a finite $T'_k$-group, where $k\geq 1$ is an integer. Then $G$ is abelian if and only if  $G$ is an elementary abelian $p$-group, for some prime $p$; or $G\cong C_4$.
\end{lemma}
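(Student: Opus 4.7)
For the backward direction, a direct computation suffices. If $G$ is an elementary abelian $p$-group of order $p^n$, every nontrivial linear character has image of order $p$, so $\cod(G)=\{1,p\}$ with $m'_G(1)=1$ and $m'_G(p)=p^n-1$, making $G$ a $T'_{p^n-1}$-group; and if $G\cong C_4$ then $\cod(G)=\{1,2,4\}$ with multiplicities $1,1,2$, so $G$ is a $T'_2$-group.

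For the converse, since $G$ is abelian every irreducible character is linear, and $\cod(\chi)=|G:\kernel\chi|$ equals the order of $\chi$ in the dual group $\widehat{G}$. Pontryagin duality yields $\widehat{G}\cong G$, so for every divisor $d$ of $|G|$,
\[
m'_G(d)=\bigl|\{g\in G : o(g)=d\}\bigr|.
\]
Hence the $T'_k$-hypothesis says that at most one divisor $d_0$ of $|G|$ contains more than one element of that order in $G$. The plan is to show that unless $G$ is elementary abelian or isomorphic to $C_4$, one can always exhibit two distinct nontrivial divisors $d_1\neq d_2$ with $m'_G(d_i)\geq 2$.

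Suppose two distinct primes $p,q$ with $p\geq 3$ divide $|G|$. Then $G$ has at least $p-1\geq 2$ elements of order $p$, and at least $(p-1)(q-1)\geq 2$ elements of order $pq$ (obtained via the Sylow decomposition by multiplying nontrivial elements of orders $p$ and $q$); this contradicts the $T'_k$-condition, so $G$ must be a $p$-group. If $G$ is not elementary abelian it contains an element of order $p^2$. For $p$ odd, the numbers of elements of orders $p$ and $p^2$ are at least $p-1\geq 2$ and $p^2-p\geq 6$ respectively, a contradiction. For $p=2$, if $G$ is non-cyclic it has at least $3$ elements of order $2$ and at least $2$ of order $4$, while a cyclic $2$-group of order $2^t$ with $t\geq 3$ has $m'_G(4)=2$ and $m'_G(8)=4$; in both sub-cases the $T'_k$-condition fails, leaving only $G\cong C_4$. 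The whole argument is a short case analysis, and the main conceptual step is the duality identity above, which reduces the lemma to an elementary counting problem.
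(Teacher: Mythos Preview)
Your proof is correct and follows essentially the same approach as the paper's: both rest on the duality identity $\cod(\lambda)=o(\lambda)$ for abelian $G$ and then reduce the problem to elementary counting of elements of prescribed orders. The only cosmetic difference is that the paper phrases the obstructions via forbidden quotients ($C_p\times C_q$ and $C_{p^2}$), whereas you count elements of orders $p$, $pq$, $p^2$ directly; these are equivalent via the isomorphism $\widehat{G}\cong G$.
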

\begin{proof}
	Since the 'if' part of the statement is clear, we only prove the 'only if' part. Note that for every $\lambda\in\Irr(G)$, $\cod(\lambda)=o(\lambda)$, where $o(\lambda)$ is the order of $\lambda$ in $\Irr(G)\cong G$. So, $G$ does not have a homomorphic image isomorphic to $C_p\times C_q$, for distinct primes $p$ and $q$. Thus, $G$ is a $p$-group, for some prime $p$. First assume that $p\not =2$. If $G$ has a factor group isomorphic to $C_{p^2}$, then $m'_G(p)$ and $m'_G(p^2)$ are greater than one, a contradiction. So $G$ is an elementary abelian $p$-group. Now assume that $p=2$. If $G$ contains an element of order greater than 2, then by easy calculation we can see that $G\cong C_4$. So in this case either $G$ is an elementary abelian 2-group or $G\cong C_4$, and the result follows.
\end{proof}
\begin{lemma}\label{small}
	Let $G$ be a finite group and $K$ be  a nontrivial   normal subgroup of $G$. In addition, let  $\chi\in \Irr(G|K)$,  $\delta\in \Irr(K)$ and $\nu\in \Irr(I_G(\delta)|\delta)$.
	\begin{itemize}
		\item[(a)] There exist $\lambda\in \Irr(K)$, $\theta\in\Irr(I_G(\lambda)|\lambda)$,  such that $\cod(\chi)=|I_G(\lambda)|/(|\ker(\chi)|\theta(1))$ and also $|I_G(\delta)|/(|\ker(\nu^G)|\nu(1))\in \cod(G)$.
		\item [(b)] If $\ker(\chi)\cap K=1$, then $|K|$ divides $\cod(\chi)$.
		\item[(c)] If	 $\chi$ is  faithful, then  $\cod(\chi)\not =p$,  where $p$ is a  prime divisor of $|G|/|K|$.
	\end{itemize}
\end{lemma}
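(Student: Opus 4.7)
All three parts should flow from Clifford's theorem applied to the restriction $\chi|_K$. For (a), since $\chi\in\Irr(G|K)$ means $K\not\subseteq\ker(\chi)$, I pick an irreducible constituent $\lambda$ of $\chi|_K$; the Clifford correspondence provides $\theta\in\Irr(I_G(\lambda)|\lambda)$ with $\chi=\theta^G$, whence $\chi(1)=|G:I_G(\lambda)|\,\theta(1)$. Substituting into $\cod(\chi)=|G|/(|\ker(\chi)|\chi(1))$ cancels the factor $|G:I_G(\lambda)|$ and yields the claimed formula $\cod(\chi)=|I_G(\lambda)|/(|\ker(\chi)|\theta(1))$. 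For the second assertion of (a), note that $\nu^G\in\Irr(G|\delta)$ by Clifford, and applying the identity just established with $(\lambda,\theta,\chi)$ replaced by $(\delta,\nu,\nu^G)$ realizes $|I_G(\delta)|/(|\ker(\nu^G)|\nu(1))$ as $\cod(\nu^G)\in\cod(G)$.

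For (b), my first step is a reduction to the faithful case: the hypothesis $\ker(\chi)\cap K=1$ says exactly that $K$ embeds into $G/\ker(\chi)$, so after passing to $G/\ker(\chi)$ I may assume $\chi$ is faithful with $\cod(\chi)=|G|/\chi(1)$, while $|K|$ is unchanged. Then, using the Clifford data from (a) with $\lambda$ a constituent of $\chi|_K$ and the identity $\chi(1)=|G:I_G(\lambda)|\,\theta(1)$, together with the divisibility $\theta(1)/\lambda(1)\mid|I_G(\lambda):K|$ (a standard Clifford fact, reducing to Ito's theorem when $K$ is abelian so $\lambda(1)=1$), I obtain that $|K|$ divides $|G|/\chi(1)=\cod(\chi)$.

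For (c), I argue by contradiction: if $\cod(\chi)=p$ with $p\mid|G/K|$, then faithfulness gives $\chi(1)=|G|/p$, and the strict inequality $\chi(1)^2<|G|$ (from $|G|=\sum_{\psi\in\Irr(G)}\psi(1)^2\geq 1+\chi(1)^2$) forces $|G|<p^2$. Combined with $p\mid|G|$ this yields $|G|=p$, whence $G$ is cyclic of order $p$ and $\chi$ is linear, so necessarily $K=G$ and $|G/K|=1$, contradicting $p\mid|G/K|$. The main obstacle I anticipate lies in (b): arranging the Clifford bookkeeping so that the factor $\lambda(1)$ drops out correctly, and $|K|$ itself, rather than merely $|K|/\lambda(1)$, divides $\cod(\chi)$, will require either tacit restriction to the abelian-$K$ case (where Ito applies cleanly) or a more delicate analysis of how $\theta(1)$ and $\lambda(1)$ interact with $|I_G(\lambda):K|$.
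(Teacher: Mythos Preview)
Your treatment of parts (a) and (b) is essentially identical to the paper's: both pass to $G/\ker(\chi)$ for (b) and read off $\cod(\chi)=|K|\cdot|I_{\overline G}(\lambda):\overline K|/\theta(1)$ from the Clifford correspondence. Your worry about the factor $\lambda(1)$ is well placed and in fact applies equally to the paper's own proof: the divisibility it invokes is really $\theta(1)/\lambda(1)\mid|I_{\overline G}(\lambda):\overline K|$, which only yields $|K|\mid\cod(\chi)$ when $\lambda(1)=1$. Indeed the statement of (b) is false without some hypothesis of this kind (take $G=K$ non-abelian simple and $\chi$ nonlinear: then $\ker(\chi)\cap K=1$ but $\cod(\chi)=|K|/\chi(1)<|K|$). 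In every application within the paper $K$ is abelian, so this is a genuine but harmless imprecision; your instinct to flag it is correct.

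Your argument for (c) is a genuinely different and more elementary route. The paper first applies (b) to force $|K|=p$, then observes $K\leq\mathbf{Z}(P)$ for $P\in\Syl_p(G)$, so $P\leq I_G(\lambda)$; combining $|I_G(\lambda):K|=\theta(1)$ with the inequality $\theta(1)^2\leq |I_G(\lambda):K|$ (from $\theta(1)\theta\leq\lambda^{I_G(\lambda)}$) forces $I_G(\lambda)=K$, a contradiction. Your approach bypasses all the Clifford bookkeeping: from $\chi(1)=|G|/p$ and the crude bound $1+\chi(1)^2\leq|G|$ you get $|G|<p^2$, hence $|G|=p$, which immediately collides with $K$ being a nontrivial proper-index subgroup. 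This is shorter and avoids any dependence on (b); the paper's argument, on the other hand, extracts slightly more structural information ($|K|=p$ and $I_G(\lambda)=K$) along the way, though that information is not used elsewhere.
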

\begin{proof} (a) It is an immediate consequence of Clifford's theorem, \cite[Theorem 6.11]{isaac}. \\
	(b) First, note that we can consider $\chi$ as an irreducible character of $\overline{G}=G/\ker (\chi)$, and adopt the bar convention for this quotient group. Since $K\cap \ker(\chi)=1$, we have  $\overline{K}=K\ker(\chi)/\ker(\chi)\cong K$. Now, using part (a), we have  $$\cod(\chi)=\frac{|I_{\overline{G}}(\lambda)|}{\theta(1)}=|\overline K|\frac{|I_{\overline{G}}(\lambda)|}{|\overline K|\theta(1)}=| K|\frac{|I_{\overline{G}}(\lambda)|}{|K|\theta(1)},$$  for some $\lambda\in \Irr(\overline{K})$ and  $\theta\in \Irr(I_{\overline{G}}(\lambda)|\lambda)$.  By,  \cite[Theorem 6.11]{isaac}, $\theta(1)$ is a divisor of $|I_{\overline{G}}(\lambda)|/|K|$, so $|K|$ divides $\cod(\chi)$.\\
	(c) Now, assume $\ker(\chi)=1$, and $\cod(\chi)=p$, for some prime $p$ dividing $|G/K|$. Hence, using (a) and (b), we have  $|K|=p$ and  there exist $\lambda\in \Irr(K)$, and  $\theta\in\Irr(I_G(\lambda)|\lambda)$,  such that   $|I_G(\lambda)/K|=\theta(1)$. It follows that  $K\leq {\bf Z}(P)$, for every $P\in \Syl_p(G)$. On the other hand, the argument in  \cite[page 84]{isaac} implies that  $|I_G(\lambda)/K|=\theta(1)=1$. Then,   $I_G(\lambda)=K$, which is not possible, as $P\leq I_G(\lambda)$.
\end{proof}
\begin{lemma}\label{small2}
	Let $G$ be a finite solvable  $T_k'$-group and $K$ be an abelian  normal subgroup of $G$ of order $r^{a}$, for some prime $r$ and  positive integers $a$ and $k$. Assume, also  $m'_{G}(n)>1$, for some integer $n$.
	\begin{itemize}
		\item[(a)]  Let $\lambda\in\Irr(K)$. Assume  $\cod(\chi)\not =n$ and $\chi$ is  faithful,   for all   $\chi\in \Irr(G|\lambda)$.  Then   there exists integer $b$, such that   $I_G(\lambda)/K$ is a group of order $r^{2b}$.
		\item[(b)] Assume  $\cod(\chi)\not =n$ and $\chi$ is  faithful  for all   $\chi \in \Irr(G|K)$. Then,   Sylow $r$-subgroups of $G/K$ have order $r^{2c}$ for some integer $c$, and Hall $r'$-subgroups of $G/K$ act on $K$, Frobeniusly.
		
	\end{itemize}
\end{lemma}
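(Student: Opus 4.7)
My approach is to derive (a) from the Clifford correspondence combined with Lemma~\ref{distinct}, and then bootstrap (b) by applying (a) to every nontrivial $\lambda \in \Irr(K)$ and invoking the Brauer permutation lemma for the $G/K$-actions on $K$ and on $\Irr(K)$.

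For part (a), I would fix a $\lambda$ satisfying the stated hypothesis and set $H = I_G(\lambda)$. By Clifford's theorem, induction gives a bijection $\Irr(H|\lambda) \to \Irr(G|\lambda)$, $\theta \mapsto \theta^G$. Each $\chi = \theta^G$ is faithful by assumption, so Lemma~\ref{small}(a) collapses to $\cod(\chi) = |H|/\theta(1)$. Since $G$ is a $T'_k$-group with $m'_G(n) > 1$, the value $n$ is the unique codegree whose multiplicity exceeds $1$; the assumption $\cod(\chi) \neq n$ for every $\chi \in \Irr(G|\lambda)$ therefore forces these codegrees, and hence the degrees $\theta(1)$, to be pairwise distinct. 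With $K$ an $r$-group, $\lambda$ invariant in $H$, and $H/K$ solvable, Lemma~\ref{distinct} applies and yields that $H$ itself is an $r$-group and $\Irr(H|\lambda) = \{\theta\}$ with $\theta(1) = \sqrt{|H|/|K|}$; squaring gives $|H/K| = r^{2b}$ for some $b \geq 0$, which is the desired conclusion.

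For part (b), every nontrivial $\lambda \in \Irr(K)$ satisfies $\Irr(G|\lambda) \subseteq \Irr(G|K)$, so the hypothesis of (a) applies and $I_G(\lambda)/K$ is an $r$-group. To extract the Frobenius action I would argue that a nontrivial $r'$-element $h$ of $G/K$ fixing some nontrivial $\lambda \in \Irr(K)$ would lie in the $r$-group $I_G(\lambda)/K$, a contradiction; hence no nontrivial $r'$-element of $G/K$ fixes any nontrivial character of $K$, and Brauer's permutation lemma transfers this to say no such element centralizes a nontrivial element of $K$, i.e., a Hall $r'$-subgroup of $G/K$ acts Frobeniusly on $K$. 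For the Sylow part, let $P/K \in \Syl_r(G/K)$; since $P/K$ is an $r$-group acting on the $r$-group $\Irr(K)$ of order $r^a > 1$, its fixed-point count is $\equiv 0 \pmod r$ and $\geq 1$, hence $\geq r$, so some nontrivial $\lambda \in \Irr(K)$ is fixed by $P/K$. Then $P/K \leq I_G(\lambda)/K$, and since $I_G(\lambda)/K$ is an $r$-subgroup containing the Sylow $r$-subgroup $P/K$, equality holds and $|P/K| = |I_G(\lambda)/K| = r^{2c}$.

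The delicate step is the transfer in (a) from distinctness of codegrees in $\Irr(G|\lambda)$ to distinctness of character degrees in $\Irr(H|\lambda)$, which is the exact hypothesis required to invoke Lemma~\ref{distinct}; this transfer relies on the faithfulness assumption that collapses the $|\ker(\chi)|$ factor in Lemma~\ref{small}(a). Once (a) is in hand, (b) is bookkeeping in coprime/$r$-local actions plus the standard Brauer duality, so I do not expect any further serious obstacle.
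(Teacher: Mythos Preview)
Your proposal is correct and follows essentially the same route as the paper: part (a) is reduced via the Clifford correspondence and faithfulness to distinctness of degrees in $\Irr(I_G(\lambda)\mid\lambda)$ and then Lemma~\ref{distinct} is invoked, while part (b) is obtained by applying (a) to every nontrivial $\lambda$ and exhibiting a nontrivial $\lambda$ fixed by a full Sylow $r$-subgroup. The only cosmetic differences are that the paper produces the Sylow-fixed $\lambda$ via $K\cap\mathbf{Z}(R)\neq 1$ together with $K\cong\Irr(K)$, whereas you use a fixed-point count modulo $r$, and the paper leaves the passage from $\Irr(K)$ to $K$ implicit where you cite Brauer's permutation lemma explicitly.
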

\begin{proof}
	(a) As $G$ is a $T_k'$-group, and $\cod(\chi)\not =n$, for all $\chi\in \Irr(G|\lambda)$,  using Lemma \ref{small}(a), it follows that  the members of $\Irr(I_G(\lambda)|\lambda)$ must have distinct degrees. Now using Lemma \ref{distinct}, we have  $|I_G(\lambda)|/|K|=r^{2b}$, for some integer $b$.\\
	(b) As $K$ is a  normal subgroup of $G$, then for each $R\in \Syl_r(G)$, we have $ K\cap {\bf Z}(R)\not =1$. Therefore, as $K\cong \Irr(K)$, there exists $1\not =\lambda\in \Irr(K)$ such that   $I_G(\lambda)/K$, contains $R/K$. Now, using part (a), $|R/K|$ must be an even power of $r$. Also $I_G(\lambda)/K$ does not contain any $r'$-element for each nontrivial $\lambda\in \Irr(K)$,  so it follows that Hall $r'$-subgroups of  $G/K$ act on $K$, Frobeniusly.
\end{proof}

\section{Non-solvable $T'_k$-groups}
In this section, we aim to characterize non-solvable $T'_k$-groups, where $k\geq 1$ is an integer, via the following lemmas.\\

\begin{lemma}\label{simple}
	Let $S$ be a finite non-abelian simple  $T'_k$-group, for some integer $k\geq 1$. Then $S\cong {\rm PSL}_2(5)$ or ${\rm PSL}_2(7)$.
\end{lemma}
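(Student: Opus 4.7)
The plan is to reduce the statement to the analogous classification for character degrees proved by Tong-Viet in \cite{hung} (recalled in the introduction): the only non-solvable $T_k$-groups are $\PSL_2(5)$ and $\PSL_2(7)$, both with $k=2$. The key observation is that in a non-abelian simple group $S$, every $\chi\in\Irr(S)\setminus\{1_S\}$ is faithful, since $\kernel(\chi)$ is a proper normal subgroup of the simple group $S$. Hence $\cod(\chi)=|S|/\chi(1)$ for every nontrivial $\chi$, whereas the trivial character is the unique character of codegree $1$. Moreover it is also the unique character of degree $1$, as $S=S'$ has no nontrivial linear character.

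Consequently, the assignment $d\mapsto |S|/d$ is a bijection between $\cod(S)\setminus\{1\}$ and $\cd(S)\setminus\{1\}$ under which multiplicities are preserved: $m'_S(d)=m_S(|S|/d)$ for every nontrivial codegree $d$. I would then translate the $T'_k$ hypothesis into the $T_k$ hypothesis on degrees. For $k=1$, both conditions force every nontrivial codegree (equivalently, every nontrivial degree) to have multiplicity one. For $k\geq 2$, the distinguished codegree $d_0$ is necessarily nontrivial, since the codegree $1$ has multiplicity $1<k$; hence the corresponding degree $|S|/d_0$ is the unique nontrivial degree of multiplicity $k$, while every other nontrivial degree appears once. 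Either way, $S$ is a $T_k$-group in the sense of \cite{hung}.

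Tong-Viet's theorem then yields $k=2$ and $S\cong\PSL_2(5)$ or $S\cong\PSL_2(7)$, as claimed. No real obstacle arises here, because the simplicity of $S$ bypasses the Clifford-theoretic or extendibility tools assembled in Section~2; those will only be needed when $S$ appears as a composition factor of a larger non-solvable $T'_k$-group in the subsequent lemmas of this section.
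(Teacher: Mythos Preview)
Your proof is correct and follows essentially the same approach as the paper: both observe that simplicity forces $\ker(\chi)=1$ for every nontrivial $\chi\in\Irr(S)$, so that $\cod(\chi)=|S|/\chi(1)$ and the $T'_k$ condition translates directly into the $T_k$ condition, after which Tong-Viet's classification in \cite{hung} finishes the argument. Your write-up is simply more explicit about the bijection and the edge case $k=1$ than the paper's two-line proof.
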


\begin{proof}
	Since $S$ is simple, for every nontrivial $\chi\in \Irr(S)$, $\ker(\chi)=1$ and so $\cod(\chi)=|G|/\chi(1)$. So $S$ is a $T_k$-group and  using \cite[Theorem 3.2]{hung} the result follows.
\end{proof}


\begin{lemma}\label{solvability1}
	Let $G$ be a finite group and $M$ be a non-abelian minimal normal subgroup of $G$ such that $G/M\cong \PSL_2(5)$ or $\PSL_2(7)$. Then $G$ is not a $T'_k$-group, for some integer $k\geq 1$.
\end{lemma}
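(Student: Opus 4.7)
The plan is to derive a contradiction from $G$ being a $T'_k$-group by exhibiting two distinct codegrees of $G$, each with multiplicity at least $2$. First I would observe that both $\PSL_2(5)$ and $\PSL_2(7)$ are simple and possess exactly two irreducible characters of degree $3$. Inflating these to $G$ (their kernels become exactly $M$, since $G/M$ is simple) produces two distinct characters $\beta_1, \beta_2 \in \Irr(G)$ with common codegree $|G/M|/3$, equal to $20$ or $56$. Under the $T'_k$ hypothesis this value is forced to be the distinguished codegree $d_0$, and $k\geq 2$.

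Next I would build a second codegree of multiplicity at least $2$ from characters in $\Irr(G|M)$. Writing $M = S_1 \times \cdots \times S_t$ with each $S_i \cong S$ a non-abelian simple group, I would select a nontrivial $\sigma \in \Irr(S)$ extending to $\Aut(S)$; such a $\sigma$ exists for every $S$ (the Steinberg character in the Lie type case, or as supplied by Lemma \ref{somelemmas}). By Lemma \ref{d2}, $\sigma^t := \sigma \times \cdots \times \sigma$ extends to some $\tilde\sigma \in \Irr(G)$, and Gallagher's theorem then produces the distinct irreducible characters $\chi_i := \tilde\sigma\,\beta_i$ ($i=1,2$) lying over $\sigma^t$, each of degree $3\sigma(1)^t$, satisfying $\chi_i|_M = 3\sigma^t$.

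The technical heart of the argument is showing that $\chi_1$ and $\chi_2$ are faithful. Since $\sigma$ is nontrivial on the simple group $S$ it is faithful, so $\sigma^t$ is faithful on $M$, yielding $\ker(\chi_i) \cap M = 1$; hence $\ker(\chi_i) \leq \Centralizer_G(M)$ and its image in the simple group $G/M$ is either trivial (forcing $\ker(\chi_i) = 1$) or all of $G/M$. In the latter case $G = M \times \ker(\chi_i)$, and $\chi_i$ factors through $M$ as an irreducible character equal to $\chi_i|_M = 3\sigma^t$, which is plainly not irreducible, a contradiction. Thus both $\chi_i$ are faithful, with common codegree $|G|/(3\sigma(1)^t) = d_0 \cdot |M|/\sigma(1)^t$. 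Because $\sigma(1)^2 < |S|$ gives $\sigma(1)^t < |M|$, this codegree strictly exceeds $d_0$; so $G$ has two distinct codegrees of multiplicity at least $2$, contradicting the $T'_k$ property. The main obstacle is the kernel case analysis, where simplicity of $G/M$ combined with the non-irreducibility of $3\sigma^t$ is used to rule out the direct-product possibility, since without faithfulness of the $\chi_i$ their codegrees could shrink and potentially collide with $d_0$.
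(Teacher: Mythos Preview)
Your proof is correct and follows the same overall architecture as the paper's: produce the two inflated degree-$3$ characters $\beta_1,\beta_2$ of $G/M$ (giving the codegree $|G|/(3|M|)$ with multiplicity $\ge 2$), extend $\sigma^t$ to $G$, apply Gallagher to obtain $\chi_1,\chi_2$ of degree $3\sigma(1)^t$, show these are faithful, and compare the resulting codegree $|G|/(3\sigma(1)^t)$ with $|G|/(3|M|)$.

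The one substantive difference is in how faithfulness of the $\chi_i$ is established in the bad case $G=M\times\ker(\chi_i)$. The paper argues that then $M$ is simple and, being a quotient of a $T'_k$-group, must itself be $\PSL_2(5)$ or $\PSL_2(7)$ by Lemma~\ref{simple}; it then disposes of the finitely many direct products $\PSL_2(q_1)\times\PSL_2(q_2)$ by an explicit count of faithful characters of degree~$9$. Your argument is cleaner and more self-contained: you observe that in the direct-product situation $\chi_i$ is inflated from $G/\ker(\chi_i)\cong M$, so $\chi_i|_M$ must be irreducible, yet $\chi_i|_M=\tilde\sigma|_M\cdot\beta_i|_M=3\sigma^t$ is visibly reducible. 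This avoids both the appeal to Lemma~\ref{simple} (hence to Tong-Viet's classification) and the case-by-case check, at no cost. The paper's route, on the other hand, does not need the inequality $\sigma(1)^t<|M|$, since the explicit groups are simply inspected; but your inequality is immediate from $\sigma(1)^2<|S|$, so nothing is lost.
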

\begin{proof}
	We argue by the contradiction and suppose that $G$ is a $T'_k$-group, for some integer $k\geq 1$. By Lemmas \ref{ somelemmas} and \ref{d2} we get that, $M$ contains an irreducible character $\lambda$ which is extendible to $G$. Since $G/M$ contains two distinct irreducible characters whose degrees are $3$, then according to Gallagher's theorem (\cite[Corollary 6.17]{isaac}), there are $\chi_1, \chi_2\in \Irr(G|\lambda)$ whose   degrees are  $3\lambda(1)$.   We show that  $\ker(\chi_i)=1$, for each $i\in \{1,2\}$. If $\ker(\chi_i)\not= 1$ for some $i\in \{1,2\}$, then $G=\ker(\chi_i)\times M$, implying that $M$ is a non-abelian simple group. Since $G/\ker(\chi_i)$ is a $T'_l$-group, for some integer $l$, then by Lemma \ref{simple}, it should be isomorphic to $\PSL_2(5)$ or $\PSL_2(7)$. Hence  $G\cong \PSL_2(q)^2$  for some $q\in \{5,7\}$ or $G\cong \PSL_2(7)\times \PSL_2(5)$.  In all cases $G$ has $4$ distinct faithful characters of degree $9$, and $2$ distinct irreducible characters with  degree $3$ and kernel $M$, implying that   $m'_G(|G|/9)= 4$ and $m'_G(|G|/(3|M|))= 2$, which is a contradiction.   So $\ker(\chi_1)=\ker(\chi_2)=1$ and $\cod(\chi_1)=\cod(\chi_2)=|G|/(3\lambda(1))$. On the other hand, $G/M$ has two distinct irreducible characters with the same codegree  $|G|/(3|M|)$ which is  different from  $|G|/(3\lambda(1))$, causing a contradiction.
\end{proof}

\begin{lemma}\label{key2}
	Let $G$ be a finite group and $M$ be a  minimal normal subgroup of $G$ such that $G/M\cong \PSL_2(5)$ or $\PSL_2(7)$. Then $G$ is not a $T'_k$-group, for some integer $k\geq 1$.
\end{lemma}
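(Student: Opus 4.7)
By Lemma~\ref{solvability1} we may assume $M$ is abelian, hence an elementary abelian $p$-group. Write $q\in\{5,7\}$ so that $G/M\cong\PSL_2(q)$; the two degree-$3$ irreducible characters of $G/M$, lifted to $G$ with kernel $M$, share codegree $d_0:=|G/M|/3\in\{20,56\}$, so $m'_G(d_0)\geq 2$. For $G$ to be $T'_k$, every codegree other than $d_0$ must have multiplicity exactly $1$; it therefore suffices to exhibit a second codegree with multiplicity at least $2$. I will split on whether or not $M$ is central.

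\textbf{Case 1 ($M\leq\Center(G)$).} Since $G/M$ is perfect with Schur multiplier of order at most $2$ and $M$ is minimal normal, either $G=M\times K$ with $K\cong\PSL_2(q)$ and $|M|=p$, or $|M|=2$ and $G\cong\SL_2(q)$. In the split case, for any nontrivial $\lambda\in\Irr(M)$ and either of the two degree-$3$ characters $\phi_i\in\Irr(K)$, the tensor product $\lambda\otimes\phi_i$ is a faithful character of $G$ of degree $3$; we thus obtain at least two characters sharing codegree $|G|/3=p|K|/3\neq d_0$. In the non-split case, direct inspection of the character tables of $\SL_2(5)$ and $\SL_2(7)$ exhibits at least two distinct codegrees of multiplicity $\geq 2$ (for instance, codegree $60$ arising from the two faithful Weil characters of degree $2$ in $\SL_2(5)$).

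\textbf{Case 2 ($M\not\leq\Center(G)$).} Here $C_G(M)/M$ is a proper normal subgroup of the simple quotient $G/M$, forcing $C_G(M)=M$ and faithful action of $G/M$ on $M$. Every $\chi\in\Irr(G|M)$ must be faithful, for $\ker(\chi)\cap M=1$ by minimality of $M$, while a nontrivial $\ker(\chi)$ would furnish a normal complement of $M$ in $G$ contained in $C_G(M)=M$, which is impossible. The Sylow $2$-subgroups of $\PSL_2(5)$ and $\PSL_2(7)$ are $V_4$ and $D_8$ respectively, neither cyclic nor generalized quaternion, so neither $\PSL_2(q)$ is a Frobenius complement; hence the action of $G/M$ on $\Irr(M)\setminus\{1\}$ is not semiregular and there is a nontrivial $\lambda\in\Irr(M)$ with $T:=I_G(\lambda)>M$. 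Then $T/M$ is a nontrivial proper subgroup of $\PSL_2(q)$, and a glance at the short subgroup list shows that every such subgroup has at least two irreducible characters of the same degree (e.g.\ two distinct linear characters). By Gallagher's theorem, or a projective-character variant if $\lambda$ fails to extend to $T$, we obtain $\theta_1\neq\theta_2\in\Irr(T|\lambda)$ of common degree; inducing to $G$ gives two distinct faithful characters of common codegree $c=|T|/\theta_1(1)$. By Lemma~\ref{small}(b), $|M|\mid c$. A brief comparison of $|M|$ with $d_0\in\{20,56\}$ over the faithful $\mathbb F_p[\PSL_2(q)]$-modules shows $|M|\nmid d_0$ in every configuration except the natural $\PSL_3(2)$-module on $C_2^3$ (so $G\cong\mathrm{AGL}_3(2)$), and that single exception is dispatched by a direct computation on $\mathrm{AGL}_3(2)$, which has codegrees $192$ and $64$ each of multiplicity $2$. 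Thus in all cases a second codegree of multiplicity $\geq 2$ appears, contradicting the $T'_k$ hypothesis.

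\textbf{Main obstacle.} The delicate step is Case~2, certifying that the repeated codegree $c=|T|/\theta_1(1)$ produced from $\Irr(G|M)$ is genuinely different from $d_0$. Lemma~\ref{small}(b) reduces this to the divisibility question $|M|\mid d_0$; the minimal-dimension estimates for faithful $\mathbb F_p[\PSL_2(q)]$-modules rule out every case except the single natural module of $\PSL_3(2)\cong\PSL_2(7)$ on $C_2^3$, which is handled by directly computing the character table of $\mathrm{AGL}_3(2)$.
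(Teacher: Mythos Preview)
Your Case~1 is fine and agrees with what the paper does. The problem is the heart of Case~2. You pick one nontrivial $\lambda$ with $T=I_G(\lambda)>M$ and assert that ``Gallagher, or a projective-character variant if $\lambda$ fails to extend,'' produces $\theta_1\neq\theta_2\in\Irr(T\mid\lambda)$ of the same degree. Gallagher does give this when $\lambda$ extends, since $T/M$ is a nontrivial proper (hence solvable) subgroup of $\PSL_2(q)$ and therefore has at least two linear characters. But when $\lambda$ does \emph{not} extend, there is no such blanket statement: $\Irr(T\mid\lambda)$ is parametrised by the irreducible $\alpha$-projective representations of $T/M$, and these need not contain two of equal degree. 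The sharpest obstruction is $T/M\cong C_2\times C_2$ with nontrivial cocycle $\alpha$ (not excluded when $p=2$): then $\Irr(T\mid\lambda)$ consists of a \emph{single} character of degree~$2$, and your construction yields nothing. You neither rule this out nor argue that one can always pass to another $\lambda$ for which extension holds; the non-Frobenius step only hands you \emph{some} $\lambda$ with nontrivial stabiliser. This is a genuine gap, not a matter of phrasing.

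The paper avoids this extension issue entirely by reversing the logic. Assuming $G$ is a $T'_k$-group, it first shows $\cod(\chi)\neq d_0$ for every $\chi\in\Irr(G\mid M)$: divisibility $|M|\mid\cod(\chi)$ plus the embedding $G/M\hookrightarrow\Aut(M)$ reduces to $|M|=8$, $G/M\cong\PSL_2(7)$, and that case is killed by the orbit bound $|G:I_G(\lambda)|\leq|M|-1=7$ (Remark~\ref{orbit}), not by computing $\mathrm{AGL}_3(2)$. Hence the members of $\Irr(G\mid M)$ have pairwise distinct codegrees, so pairwise distinct degrees, and now Lemma~\ref{distinct} (Berkovich--Isaacs--Kazarin), applied inside each $I_G(\lambda)$, forces $I_G(\lambda)/M$ to be an $r$-group of square order, i.e.\ $1$ or $C_2^2$. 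This pins $\cd(G\mid M)$ to $\{|G/M|/2,\,|G/M|\}$, disjoint from $\cd(G/M)$, so $G$ is a $T_2$-group, contradicting Tong-Viet. The point is that Lemma~\ref{distinct} is exactly the device that absorbs the extendibility question you were trying to finesse: it converts ``distinct degrees over $\lambda$'' into structural control of $I_G(\lambda)/M$ without ever needing $\lambda$ to extend.
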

\begin{proof}
	Note that by  Lemma \ref{solvability1}, $M$ is abelian. 	Let $G$ be a counterexample. First we prove that $G$ is perfect.
	If $G'\cap M=1$, then $G=G'\times M$. Let $\lambda\in \Irr(M)$ be a nontrivial irreducible character and $\chi_i\in \Irr(G')$,  for $i\in \{1,2\}$, be characters with degree $3$. Then, $\ker(\chi_i\times \lambda)=1$, for $i=1,2$, and so we get  a contradiction by our assumption as $m'_G(|G|/(3|M|))>1$ and $m'_G(|G|/3)>1$. Therefore, $M\leq G'$ and $G$ is perfect. Let $1_M\not=\lambda\in \Irr(M)$ and $\chi\in\Irr(G|\lambda)$. If $\ker(\chi)\not=1$, then $G=\ker(\chi)\times M$, which is a contradiction, as $G$ is perfect. Hence all characters in $\Irr(G|M)$ are faithful.  Then, by Lemma \ref{small}(b),  $|M|$ is a divisor of $\cod(\chi)$.

	We remark that $C_G(M)=M$, since otherwise $M\leq \mathbb{M}(G/M)$, where $\mathbb{M}(G/M)$ is the Schur multiplier of $G/M$, and $G\cong \SL_2(q)$, for $q\in \{5,7\}$. It is a contradiction, as $G$ is a $T_k'$-group.

	Note that $|G|/(3|M|)\in\cod(G/M)$ has nontrivial multiplicity. We claim that $\cod(\chi)\not=|G|/(3|M|)$, for every $\chi\in\Irr(G|\lambda)$.  Suppose $\cod(\chi)=|G|/(3|M|)$, for some $\chi\in\Irr(G|\lambda)$ and $\lambda\in \Irr(M)$. Since $M$ is an elementary abelian group, it follows that $|M|=2, 4, 5, 7$ or $8$.  As $C_G(M)=M$,  by the normalizer-centralizer theorem, the only possibility is   $G/M\cong \PSL_2(7)$ and $|M|=8$. Therefore, by Lemma \ref{small}(a),   $\cod(\chi)=|G|/(3|M|)=8\times 7= |M|\times |I_G(\lambda)|/(|M|\theta(1))$, for some $\theta\in \Irr(I_G(\lambda)|\lambda)$. Note that, $A=I_G(\lambda)/M$ is a subgroup of $\PSL_2(7)$. Hence, looking at the subgroups of $\PSL_2(7)$  either $|A|=7$  or $|A|=21$, implying that $8=|M|\leq  |G:I_G(\lambda)|\in \{8,8\times 3\}$, which is impossible, by Remark \ref{orbit}. Therefore, all characters in $\Irr(G|M)$ have distinct codegrees and so different degrees. As, $C_G(M)=M$, we deduce that $I_G(\lambda)\not =G$, for every $1\not =\lambda\in \Irr(M)$. So $I_G(\lambda)/M$ is  a proper subgroup of $\PSL_2(5) $ or $\PSL_2(7)$ and so it is solvable, for every $1\not = \lambda\in \Irr(M)$. Now using Lemma \ref{small2}(a), we get that $I_G(\lambda)/M=1$ or $C_2^2$, for all $1\not=\lambda \in \Irr(M)$. Therefore, applying Lemma \ref{distinct}, $\cd(G|M)\subseteq \{30, 60\}$ when $G/M\cong \PSL_2(5)$, and  $\cd(G|M)\subseteq \{84,168\}$ when $G/M\cong \PSL_2(7)$. Hence $\cd(G/M)\cap \cd(G|M)=\emptyset$.
	Thus,  $G$ is a $T_2$-group, and using \cite{hung} we get  a contradiction.
\end{proof}

{\bf Proof of the Main Theorem case (a):}
Let $G\cong \PSL_2(5)$ or $\PSL_2(7)$. Then for every nontrivial $\chi\in \Irr(G)$, $\ker(\chi)=1$ and so $\cod(\chi)=|G|/\chi(1)$.  By \cite[Theorem 3.2]{hung},  $G$ is a $T_2$-group and so $G$ is a $T_2'$-group. 
Now,  we concentrate on the 'only if' part.  Let $G$ be a counterexample with minimal order. Let $M$ be the last term of the derived series of $G$, and $N$ be a normal subgroup of $G$ contained in $M$ such that $M/N$ is a chief factor of $G$.

First, we claim that $G$ is perfect. Suppose false. Hence $M$ is a proper subgroup of $G$. Note that, $G/N$ is a finite non-solvable $T'_l$-group, for some integer $l\geq 1$. If $N\not=1$, then by the minimality of $|G|$, $G/N\cong \PSL_2(5)$ or $\PSL_2(7)$, and consequently  $G=M$, a contradiction. Hence, $N=1$.  Let $C=C_G(M)$. If $C\not= 1$, then by the minimality of $|G|$ we deduce that $M\cong G/C\cong \PSL_2(5)$ or $\PSL_2(7)$ and so $G\cong M\times C$. Since $C$ is solvable, it has a nontrivial linear character, say $\zeta$. Let also $\chi_1$ and $\chi_2$ be two distinct irreducible characters of $M$ of degree $3$. Then $\chi_1\times \zeta$ and $\chi_2\times \zeta$ are two irreducible characters with the same degree $3$ whose kernels are isomorphic to $\ker(\zeta)$. Therefore, they have the same codegree $|G|/(3|\ker(\zeta)|)$ and it is different from $|G|/(3|C|)=\cod(\chi_i\times 1)$ for $i\in \{1,2\}$, a contradiction. Hence $C=1$ and $G$ is a subgroup of $\Aut(M)$.

Using Lemmas \ref{ somelemmas} and \ref{d2}, there exists $\lambda\in \Irr(M)$ which is extendible to $G$. We claim that  $G/M$ is a $D_0$-group or an abelian group.  Otherwise, $G/M$ possesses at least  two distinct nonlinear irreducible characters with the same degree $d>1$, and also since $G/M$ is solvable, it has  at least two distinct linear characters. So by Gallagher's theorem, there exists  four  distinct characters  $\lambda_1$, $\lambda_2$, $\psi_1$ and $\psi_2$ in $ \Irr(G|\lambda)$, such that $\lambda_1(1)=\lambda_2(1)=\lambda(1)$ and   $\psi_1(1)=\psi_2(1)=\lambda(1)d$. Note that,  as $M$ is the only minimal normal subgroup of $G$, $\ker(\chi)=1$, for all $\chi\in \{\lambda_1, \lambda_2, \psi_1, \psi_2\}$. Hence, $m'_G(\cod(\lambda_1))>1$  and $m'_G(\cod(\psi_1))>1$, implying that   $G$ is not a $T'_k$-group, which is a contradiction. Therefore, $G/M$ is a $D_0$-group or an abelian group, as desired. Now we claim that $|G/G'|=2 $. Otherwise, there exist at least two distinct nontrivial irreducible characters in $\Irr(G/G')$ with the same codegree. On the other hand, by Gallagher's theorem,  there exist at least two distinct  extensions of $\lambda$ to $G$ with the same codegree, which causes a contradiction. So $G/G'$ is a cyclic group of order $2$. Therefore, looking at the classification of the $D_0$-groups (see \cite{D_0}), we get that the only possibilities  for $G/M$ is being isomorphic to either $S_3$ or $C_2$. Recall that, by Gallagher's theorem,   the multiplicity of $\cod(\chi)$, for some $\chi\in\Irr(G|\lambda)$ is greater than one, where $\chi$ is an extension of $\lambda$. Noting  that $\ker(\psi)=1$, for all $\psi\in \Irr(G|M)$, we deduce   the multiplicity of every character degree in $\cd(G|M)\setminus \{\lambda(1)\}$ must be trivial, as $G$ is a $T_k'$-group, for some integer $k\geq 1$. As $\cd(G/M)\subseteq \{2\}$, we deduce that  $\cd(G|M)\cap \cd(G/M)=\emptyset$. Then,  we conclude that $G$ should be a $T_k$-group, for some integer $k\geq 1$. Now, looking at the structure of non-solvable  $T_k$-groups (see \cite{hung}), we get a contradiction.$\Box$

Hence $G=M$ is perfect. Let $K$ be a maximal normal subgroup of $G$. Then $G/K$ is a simple $T'_r$-group, for some integer $r\geq 1$. Lemma \ref{simple} implies that $G/K \in \{\PSL_2(5),\PSL_2(7)\}$.  If $K=1$, then we are done, so we may assume $K$ is not trivial. Let $L$ be a normal subgroup of $G$ contained in $K$ such that $K/L$ is a chief factor of $G$. Then we work on $\overline{G}=G/L$ and we use the bar convention for this quotient group. As, $\overline{G}/\overline{K}\in \{\PSL_2(5),\PSL_2(7)\}$,  by Lemma \ref{key2}, we get  a contradiction.

\section{Solvabel $T'_k$-group}
In this section we aim to study solvable $T'_k$-groups for some integer $k\geq 1$,  and prove Case (b) of the main theorem,  through Lemmas \ref{nilpotent}-\ref{suz}.

\begin{lemma}\label{nilpotent}	Let $G$ be a  finite non-abelian $p$-group, for some prime $p$. Then  $G$ is a  $T'_k$-group for some  integer $k\geq 1$, if and only if $p=2$ and
	$G$ is an extra-special group.
\end{lemma}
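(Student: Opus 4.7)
\emph{If direction.} If $G$ is extra-special of order $2^{2n+1}$, then $G/G'$ is elementary abelian of order $2^{2n}$ and $G$ has a unique faithful irreducible character of degree $2^n$, so $\cod(G)=\{1,2,2^{n+1}\}$ with multiplicities $1$, $2^{2n}-1$, $1$, making $G$ a $T'_{2^{2n}-1}$-group.

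\emph{Only if direction.} The plan is to induct on $|G|$, with the base case of order $p^3$ handled directly (only $D_8$ and $Q_8$ among non-abelian groups of order $p^3$ are $T'_k$-groups). The codegree of a linear character equals its order in $\Irr(G/G')$; since $G$ is non-abelian it is non-cyclic, so $G/G'$ is a non-cyclic abelian $p$-group, and if any invariant factor had order $\geq p^2$ then both $m'_G(p)$ and $m'_G(p^2)$ would exceed $1$, contradicting $T'_k$. Hence $G/G'$ is elementary abelian of rank $r\geq 2$, and $d_0=p$, $k=p^r-1$, with every non-linear codegree occurring with multiplicity one. To pass from a general $p$ to $p=2$, take $N\leq G'\cap Z(G)$ of order $p$: the Galois group $\mathrm{Gal}(\mathbb{Q}(\zeta_p)/\mathbb{Q})$ permutes $\Irr(N)\setminus\{1\}$ transitively and the corresponding action on $\Irr(G)$ preserves degrees and kernels, so any character of $G$ above a non-trivial $\lambda\in\Irr(N)$ yields $p-1$ distinct characters of $G$ with a common codegree $\neq p$; for $p\geq 3$ this contradicts $T'_k$.

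Next, to prove $|G'|=2$, assume otherwise and pick a minimal normal $N\leq G'$ of order $2$. Codegree multiplicities only decrease under quotients, so $G/N$ is a non-abelian $T'_{2^r-1}$-group of smaller order, and induction makes it extra-special. This forces $|G'|=4$ and $Z(G)/N\leq Z(G/N)=G'/N$, so $Z(G)\leq G'$ and $|Z(G)|\in\{2,4\}$. If $|Z(G)|=2$ then every $\chi\in\Irr(G|N)$ is faithful, the $T'_k$-condition forces distinct degrees, each at most $2^n$ (since $\chi(1)^2\leq|G:Z(G)|$), and $\sum\chi(1)^2$ must equal $|G|/|N|=2^{2n+1}$; but distinct non-trivial powers of $4$ up to $4^n$ sum to less than $2\cdot 4^n$, a contradiction. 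If $|Z(G)|=4=|G'|$ then $G$ has class $2$; the case $Z(G)\cong C_4$ is excluded because the two faithful characters of $Z(G)$ each sit under a unique faithful character of $G$ of degree $2^n$, giving two of common codegree $2^{n+2}$; and the case $Z(G)\cong C_2\times C_2$ is excluded by applying the induction to each of the three minimal normals $N_1,N_2,N_3\leq G'$: the three extra-special quotients $G/N_i$ give three inflated characters of $G$ with common degree $2^n$ and common codegree $2^{n+1}$.

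Finally, with $|G'|=2$ and $\Aut(C_2)$ trivial we have $G'\leq Z(G)$, so $G$ has class $2$. If $|Z(G)|=2^a\geq 4$, the commutator pairing on $A=G/Z(G)$ with values in $G'$ is a non-degenerate alternating $\mathbb{F}_2$-form, so the projective-representation theory of the central extension $Z(G)\to G\to A$ gives, for each of the $2^{a-1}$ characters $\lambda\in\Irr(Z(G))$ non-trivial on $G'$, exactly one character of $G$ above $\lambda$, of degree $\sqrt{|G:Z(G)|}$ and kernel $\ker\lambda$. A short case split---$Z(G)$ of exponent $2$ (all such $\lambda$'s have kernel of the common size $2^{a-1}$) versus exponent $>2$ (the involution $\lambda\mapsto\lambda^{-1}$ pairs up characters of order $>2$ with equal kernels)---produces at least two characters of $G$ with equal codegree, again contradicting $T'_k$. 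Therefore $Z(G)=G'$ has order $2$ and $G$ is extra-special. The main difficulty I anticipate is the case analysis in the third paragraph: rigorously ruling out $|G'|=4$ by combining the extra-special structure of $G/N$ from the induction with the pairwise-distinctness of non-linear codegrees in $G$.
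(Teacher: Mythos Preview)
Your argument is correct and takes a genuinely different route from the paper. Both proofs begin with the same Galois-action idea to force $p=2$; you should, however, add one line justifying that the common codegree is $\neq p$ (since such $\chi$ is nonlinear with $\chi(1)\geq p$ and $\chi(1)^2\leq |G/\ker\chi:Z(G/\ker\chi)|$, the equality $\cod(\chi)=p$ would give $|Z(G/\ker\chi)|\leq p/\chi(1)\leq 1$, impossible for a $p$-group). After that the two proofs diverge. The paper takes $K$ with $G'/K$ a chief factor, invokes an external structural result on rational $2$-groups to decompose $Z(G/K)=\bar N\times (G/K)'$, shows $\bar N=1$ by a short complement-switching trick, and then eliminates $K$ via Lemma~\ref{small2}(b). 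You instead induct on $|G|$: passing to $G/N$ for a minimal normal $N\leq G'$ yields an extra-special quotient, pinning $|G'|\leq 4$, and a direct case analysis on $Z(G)$ together with a counting argument finishes. Your route is more self-contained (no citation on rational groups, no appeal to Lemma~\ref{small2}) at the cost of heavier casework; incidentally, your subcase $Z(G)=G'\cong C_4$ is in fact vacuous, since $G/G'$ elementary abelian and class~$2$ force $[x,y]^2=[x^2,y]=1$, so $G'$ is an abelian group generated by involutions and cannot be $C_4$.
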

\begin{proof}
	First, we prove that $G$ is a rational group and $p=2$.	
	Let $\chi \in \Irr(G| G')$ and $\epsilon$ be a $| G|$th root of unity and $\delta\in  $Gal$(\mathbb{Q}(\epsilon)/\mathbb{Q})$. Then $\chi^{\delta}\in \Irr(G)$ and $\chi^{\delta}(1)=\chi(1)$.
	As $|\ker(\chi^{\delta})| =| \ker(\chi)|$,   we  have $\cod(\chi^{\delta}) =\cod(\chi)$.  We show that  $|G|/|\ker(\chi)G'|$ is not trivial.  Suppose false,  and take $ K$ to be a normal subgroup of $G$, contained in $ G'$, such that $G'/K$ is a cheif factor of $G$ and $\ker(\chi)\cap G'\leq K$.  Hence, 	
	$$\frac{G}{K}\cong \frac{G'}{K}\times \frac{\ker(\chi)K}{K}$$
	is an abelian group, implying that $G'\leq K$, a contradiction.    Therefore by Lemma \ref{small}(c), $\cod(\chi)\not =p$.   So, as $ G$ is a $T'_k$-group, for some integer $k$,  and $m'_G(p)>1$,  we get that $\chi^{\delta}=\chi$. Hence $\chi(x)$ is rational for all $x\in G$. So, all nonlinear irreducible characters of $G$ are rational-valued,
	implying that  $|G|$ is even  by \cite[(3.16), p. 46]{isaac}, and it follows that $p = 2$. Note that by Lemma \ref{abeliana}, either  $G/G'\cong C_4$ or $G/G'$ is an elementary abelian  $2$-group. Since $G$ is a $2$-group, the first case is not possible and so linear  characters of $G$ are also rational-valued.  Hence  $ G$ is a rational group, as wanted.

	Assume $K$ is a normal subgroup of $G$, contained in $G'$, such that $G'/K$ is a chief factor of $G$. Write $\overline G=G/K$ and adopt the bar convention for this quotient group.
	Remark that $\overline G'$ is a  central subgroup of $\overline G$ of order $2$.  Then,  ${\bf Z}(\overline G)\cong \overline N\times \overline G'$, where $\overline N$ is an elementary abelian group, by \cite[Theorem 3.6]{dim}.
	
	We prove  that ${\bf Z}(\overline G/\overline N)=\overline G'\overline N/\overline N$. Suppose false, then there exists an element  $x$ out of $\overline G'\overline N$, such that for all elements $y\in \overline G$, $[x,y]\in \overline N$. On the other hand $[x,y]\in \overline G'$, implying that $[x,y]=1$ and so $x\in {\bf Z}(\overline G)=\overline G'\overline N$, a contradiction, as wanted.	
	Therefore, $\overline G/\overline N$ is an extra-special group. Now, we aim to prove that $\overline N=1$.  Suppose false. Let $\overline N_1, \overline N_2$ be two different subgroups of ${\bf Z}(\overline G)$ of order $|\overline N|$, not containing  $\overline G'$. Then, replacing $\overline N$ with each one of $\overline N_i$'s, for $i=1,2$, similar to the above discussion, we have $\overline G/\overline N_i$, for $i=1,2$, are extra-special groups with the same order. Therefore, $\overline G$ has two distinct nonlinear  irreducible characters with the same codegree different from $2$, a contradiction. Thus, $\overline N=1$.  Hence $\overline G$ is an extra-special group.	
	If $K$ is trivial, we are done. Hence, we may assume $K$ is a minimal normal subgroup of $G$. Then $K\leq {\bf Z}(G)$.
	
	Note that,  $K={\bf Z}(G)$, as otherwise ${\bf Z}(G)=K\times T$, where $TK/K= {\bf Z}(\overline G)$ (as ${\bf Z}(G)/K\leq {\bf Z}(G/K) $ and $|{\bf Z}(G/K)|=|G'/K|=2$ ), hence replacing $K$ by $T$ in the above discussion, we have  $G/T$ and $G/K$ are  isomorphic and so $G$ has two distinct nonlinear irreducible characters with the same codegree, a contradiction. Then, ${\bf Z}(G)=K$ and  $\chi$ is faithful for all $\chi\in \Irr(G|K)$,  yielding that $|G/K|$ is an square,  by Lemmas \ref{small}(c) and \ref{small2}(b), a contradiction. So $K=1$, as wanted.
\end{proof}

\begin{lemma}\label{key1}
	Let $G$ be a finite solvable  $T'_k$-group, for some integer $k\geq 1$,  and $G/G'\cong C_p^{\alpha}$,  for some prime $p$ and integer $\alpha$. Assume also $G'$ is a minimal normal subgroup of $G$ whose order is co-prime to $|G/G'|$. Then one of the following occurs:
	\begin{itemize}
		\item[(1)]  $G\cong C_2\times S_3$;
		\item[(2)] $G\cong D_{2q}$ for some prime $q\geq 3$; or
		\item [(3)] $G$ is a Frobenius group  $C_2^{\beta}\rtimes C_{2^{\beta}-1}$, where $p=2^{\beta}-1$. In particular $\beta$ is odd.
		
	\end{itemize}
\end{lemma}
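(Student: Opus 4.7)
The plan is to realize $G$ as a semidirect product via Schur--Zassenhaus and then enumerate codegrees explicitly through Clifford theory. By coprimality one has $G = G' \rtimes H$ with $H \cong C_p^\alpha$, and minimality plus solvability give $G' \cong C_q^\beta$ for a prime $q \neq p$. Put $L := C_H(G')$. Because $H$ is abelian and $L$ centralizes $G'$ by definition, $L$ centralizes $G = G'H$, so $L \leq Z(G)$, and hence $L \cap G' = 1$ (otherwise $G' \cap Z(G) = G'$ by minimality of $G'$, making $G$ nilpotent and thus abelian, forcing $G' = 1$). The action of $H/L$ on $G'$ is therefore faithful and irreducible. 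Since any abelian group acting faithfully and irreducibly on $\mathbb{F}_q^\beta$ embeds into $\mathbb{F}_{q^\beta}^\ast$, the group $H/L$ is cyclic; being a quotient of the elementary abelian $H$, it is trivial or of order $p$, and the trivial case again forces $G' = 1$. So $H/L \cong C_p$ and $|L| = p^{\alpha - 1}$.

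For any nontrivial $\lambda \in \Irr(G')$, the faithful Frobenius action of $H/L \cong C_p$ on $\Irr(G')$ gives $I_G(\lambda)/G' = L$, so $T := I_G(\lambda) = L \times G'$. By coprimality $\lambda$ extends to some $\tilde\lambda \in \Irr(T)$, and Clifford's correspondence lists $\Irr(G\mid\lambda) = \{\chi_\mu := (\tilde\lambda\, \mu)^G : \mu \in \Irr(L)\}$, each of degree $|G:T| = p$. The crucial computation is $\ker(\chi_\mu) = \ker\mu$: one has $\ker(\tilde\lambda\mu) = \ker\mu \times \ker\lambda$ inside $L \times G'$; the factor $\ker\mu \leq L \leq Z(G)$ is already $G$-invariant, while $\bigcap_{g\in G}\ker(\lambda^g) = 1$ by minimality of $G'$, so the core of $\ker(\tilde\lambda\mu)$ in $G$ is exactly $\ker\mu$. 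Consequently
\[
\cod(\chi_\mu) \;=\; \frac{|G|}{|\ker\mu|\, p}
\]
equals $|G'|$ when $\mu = 1$ and $p|G'|$ otherwise.

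Thus $\cod(G) \subseteq \{1,\, p,\, |G'|,\, p|G'|\}$ with multiplicities
\[
m'_G(p) = p^\alpha - 1,\qquad m'_G(|G'|) = \frac{|G'|-1}{p},\qquad m'_G(p|G'|) = \frac{(|G'|-1)(p^{\alpha-1}-1)}{p}.
\]
The $T'_k$-hypothesis permits at most one of these to exceed $1$. If $\alpha \geq 2$, then $m'_G(p) \geq 3$, which compels $(|G'|-1)/p = 1$ and $p^{\alpha-1}-1 = 1$, forcing $p = 2$, $\alpha = 2$, $|G'| = 3$ and hence $G \cong C_2 \times S_3$. If $\alpha = 1$ and $p = 2$, the irreducibility of the inversion action of $C_2$ on $G'$ forces $\beta = 1$, so $G \cong D_{2q}$ with $q \geq 3$ an odd prime. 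If $\alpha = 1$ and $p$ is odd, then $m'_G(p) = p-1 \geq 2$ forces $|G'| = p+1$; parity then forces $q = 2$, so $p = 2^\beta - 1$ is Mersenne and $G \cong C_2^\beta \rtimes C_p$, as in case (3).

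The main technical hurdle is the kernel computation $\ker(\chi_\mu) = \ker\mu$, which relies simultaneously on the centrality of $L$ (to get $G$-invariance of $\ker\mu$) and the minimality of $G'$ (to get $\bigcap_g\ker(\lambda^g) = 1$). The identification $H/L \cong C_p$ via the abelian version of Schur's lemma is the other key step; once both are in hand, the final three-way case analysis is straightforward bookkeeping.
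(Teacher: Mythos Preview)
Your proof is correct and takes a genuinely different, more self-contained route than the paper's. The paper writes $G = C \times T$ with $C = C_H(G')$ central and then, in the case $|T|_p > 2$, observes via Lemma~\ref{small}(c) that every $\chi \in \Irr(T\mid T')$ is faithful with $\cod(\chi) \neq p$; since $m'_T(p) > 1$, the $T'_k$-hypothesis forces these characters to have pairwise distinct degrees, so $T$ is a $D_0$-group, and the paper then invokes the Berkovich--Chillag--Herzog classification of $D_0$-groups to pin $T$ down as a Frobenius group with complement $C_p$ and kernel of order $p+1$. The remaining case $|T|_p = 2$ is handled by a separate argument. You instead bypass the $D_0$-classification entirely: once you know $H/L \cong C_p$ acts Frobeniusly (which follows from irreducibility plus prime order), your kernel computation $\ker(\chi_\mu) = \ker\mu$ gives the full codegree multiset of $G$ in closed form, and the three-way split on $(\alpha,p)$ is then immediate arithmetic. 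Your argument is shorter and avoids citing an external structure theorem; the paper's approach has the mild advantage of reusing machinery (Lemmas~\ref{small} and~\ref{small2}) that it needs elsewhere in the section anyway.

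Two minor remarks. First, the parenthetical you attach to ``$L \cap G' = 1$'' is unnecessary: since $L \leq H$ and $H \cap G' = 1$ in the semidirect product, this is automatic. Second, the clause ``in particular $\beta$ is odd'' in the lemma's statement is not quite right as written (for instance $A_4 \cong C_2^2 \rtimes C_3$ satisfies all the hypotheses with $\beta = 2$); what the Mersenne condition actually forces is that $\beta$ be \emph{prime}. You were right not to try to prove the oddness claim.
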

\begin{proof}
	Let $|G'|=q^{\beta}$, for some prime $q$ and   integer $\beta$ and  $C_G(G')=G'\times C$, where $C$ is an elementary abelian $p$-group. Then $C\leq {\bf Z}(G)$ and $G=C\times T$, where $T$ is a subgroup of $G$ containing $G'$. Remark that $T$ is not isomorphic to $G'$. Note that, $\chi$ is faithful, for all  $\chi\in \Irr(T|T')$, as $\ker(\chi)\leq C_T(T')=T'=G'$ and $\ker(\chi)\cap T'=1$.
	
	Firstly, assume that $|T|_p>2$, then $m'_T(p)>1$. Hence, by Lemma \ref{small}(c),  we get that  all characters in $\Irr(T|T')$ have distinct degrees and so $T$ is a $D_0$-group. Looking at the structure of $D_0$-groups we get that $T$ is a Frobenius group, with cyclic Frobenius complement of order $p$ and  
	$p=q^{\beta}-1$.  This is not possible, unless $q=2$ and $\beta$ is odd. Hence $p\not= 2$. Now we show  that in this case $C$ is trivial. Assume, $C=C_p^{\alpha_1}$ and $\alpha_1\geq 1$. Since $p\not =2$, we deduce that   $|\Irr(C)\setminus\{1_C\}|> 1$. Let    $\lambda_1, \lambda_2\in \Irr(C)$ be  two distinct nontrivial characters and $\chi\in \Irr(T)$ be  the only nonlinear irreducible character  of $T$. Then $|\ker(\lambda_1\times \chi)|=|\ker(\lambda_2\times \chi)|=|C|/p$ and $\lambda_1\times \chi(1)=\lambda_2\times \chi(1)=\chi(1)$,  so $\cod(\lambda_1\times \chi)=\cod(\lambda_2\times \chi)$. Note that $\cod(\lambda_1\times \chi)$  is not equal to $p$, by Lemma \ref{small}(c),  a contradiction, as  both multiplicities of $\cod(\lambda_1\times \chi)$ and $p$ are not
	trivial. Hence $C=1$. So $G$   is the described group in case (3) of the Lemma.
	
	Now we  assume $|T|_p=p=2$. Recall that, $C_{T}(T')=T'$ and $T'$ is a  minimal normal  subgroup of $T\cong G/C$,  where $T$ is a Frobenius group whose Frobenius complements are  cyclic groups of order $2$. Therefore, $T\cong G/C\cong D_{2q}$. First assume $q> 3$.   We  prove that $C$ is trivial. Suppose false. In this case $\Irr(G/C)$ contains two   distinct nonlinear irreducible characters $\chi_1$ and $\chi_2$. Let $\lambda\in \Irr(C)$ be a nontrivial irreducible character. Then, $|\ker(\chi_1\times \lambda)|=|\ker(\chi_2\times \lambda)|=|\ker(\lambda)|=|C|/2$ and so  $\cod(\chi_1\times \lambda)=\cod(\chi_2\times \lambda)=2\cod(\chi_1)$, hence $m'_G(2\cod(\chi_1))>1$. Noting that $m'_{T}(\cod(\chi_1))=(q-1)/2>1$, we get a contradiction. Therefore $C=1$. So $G\cong D_{2q}$, a described group in case (2) of the lemma.
	
	The last possibility is that $q=3$ and $G\cong C_2^{\alpha-1}\times D_6$. Hence, in this case $m'_G(6)=2^{\alpha-1}-1$ and $m'_G(2)=2^{\alpha}-1$. Therefore, either $\alpha=1$ or $\alpha=2$, which means that either  $G\cong C_2\times S_3$ or $G\cong D_6$, which are the groups in case (1) and (2) of the lemma.
\end{proof}


\begin{lemma}\label{key3}
	Let  $G$ be a finite solvable $T'_k$-group for some integer $k\geq 1$ and $K< G'<G$ be  a normal series of $G$. Assume also $G/K$ is  isomorphic to  the  Frobenius group $C_{2^{\beta}}\rtimes C_p$, for some integer $\beta$ and a prime number $p=2^{\beta}-1$. Then, $K=1$.
\end{lemma}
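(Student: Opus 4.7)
The plan is to assume $K\neq 1$ and argue by a minimal counterexample. First, since $(G/K)'=G'/K$ equals the Frobenius kernel $C_2^\beta$, we have $G/G'\cong C_p$, so $G$ possesses exactly $p-1$ nontrivial linear characters, all of codegree $p$. As $p=2^\beta-1\ge 3$, this gives $m'_G(p)\ge p-1\ge 2$; combined with the $T'_k$ hypothesis, the repeated codegree must be $d_0=p$ and every other codegree of $G$ has multiplicity $1$. Moreover, by Lemma \ref{small}(c) every faithful irreducible character of $G$ has codegree different from $p$. The task is therefore to show that the existence of characters in $\Irr(G|K)$, forced by $K\neq 1$, produces either an additional character of codegree $p$ or a second codegree with multiplicity at least two.

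I would then reduce to the case that $K$ is a minimal normal subgroup of $G$. If $L$ is a minimal normal subgroup of $G$ contained in $K$, then $G/L$ satisfies the hypotheses of the lemma (with $K/L$ in place of $K$) and is again a $T'_{k'}$-group, since codegrees of $G/L$ lift to codegrees of $G$ with at most the same multiplicity, so $m'_{G/L}(p)\ge p-1\ge 2$ while every other multiplicity remains $\le 1$. Minimality of $G$ then forces $L=K$, so $K$ is elementary abelian of order $r^a$ for some prime $r$. A standard Clifford argument using that $K$ is abelian and minimal normal gives $\ker\chi\cap K=1$ and $\ker\chi\le C_G(K)$ for every $\chi\in\Irr(G|K)$. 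Since $p=2^\beta-1$ makes $C_p$ act irreducibly on $C_2^\beta$, the only normal subgroups of $G/K$ are $1$, $C_2^\beta$, and $G/K$, and consequently $C_G(K)\in\{K,G',G\}$.

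If $C_G(K)=K$, then all $\chi\in\Irr(G|K)$ are faithful and, by Lemma \ref{small}(c), have codegree different from $p$. Lemma \ref{small2}(b) applied with $n=p$ then forces $|G/K|_r=r^{2c}$ and a Hall $r'$-subgroup of $G/K$ to act Frobeniusly on $K$. The option $r=p$ is impossible, as $|G/K|_p=p$; the option $r\notin\{2,p\}$ would make $G/K$ itself a Frobenius complement, contradicting its elementary abelian Sylow $2$-subgroup of rank $\beta\ge 2$. The only remaining possibility is $r=2$ with $\beta$ even, and the Mersenne condition then forces $\beta=2$, $p=3$, so $G/K\cong A_4$ and $K\cong C_2^a$. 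In the cases $C_G(K)\in\{G',G\}$, the conjugation action of $G/C_G(K)$ on $K$ is tightly controlled: for every nontrivial $\lambda\in\Irr(K)$ the inertia group $I_G(\lambda)$ equals $G'$ or $G$ respectively, and then Clifford induction combined with Gallagher's theorem applied through $G'/K\cong C_2^\beta$ produces multiple characters of $\Irr(G|K)$ sharing a common codegree different from $p$, contradicting the $T'_k$ hypothesis.

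The main obstacle will be the residual $A_4$-subcase within $C_G(K)=K$: the Frobenius condition delivered by Lemma \ref{small2}(b) applies only to the Hall $2'$-subgroup $C_3$, while $V_4\le A_4$ may fix some characters of $K$, so a further case-free argument is needed. I would finish this subcase by stratifying the $A_4$-orbits on $\Irr(K)$ according to their stabilizer (one of $1$, $C_2$, $V_4$), computing the codegrees of the corresponding faithful induced characters, and extracting a forced codegree collision from the distinct-codegree requirement on $\Irr(G|K)$; this, together with the $p-1$ linear characters of codegree $p$, contradicts the $T'_k$ property and completes the proof that $K=1$.
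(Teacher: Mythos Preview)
Your overall plan—reducing to $K$ minimal normal and splitting according to $C_G(K)\in\{K,G',G\}$—is sound but diverges from the paper's route, and two points deserve attention.

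First, the ``residual $A_4$-subcase'' you flag inside $C_G(K)=K$ is in fact empty. If $r=2$ then $G'$ is a $2$-group (since $G'/K\cong C_2^{\beta}$ and $K$ is a $2$-group), and the minimal normal $2$-subgroup $K$ meets $Z(G')$ nontrivially; as $K\cap Z(G')\unlhd G$, minimality gives $K\le Z(G')$, whence $G'\le C_G(K)$, contradicting $C_G(K)=K$. Thus within $C_G(K)=K$ you automatically have $r\notin\{2,p\}$, and Lemma~\ref{small2}(b) already forces the whole of $G/K$ to act Frobeniusly on $K$, giving the contradiction (a Frobenius group with elementary abelian Sylow $2$-subgroup of rank $\beta\ge 2$ cannot be a Frobenius complement) with no orbit-counting needed.

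Second, your treatment of the cases $C_G(K)\in\{G',G\}$ is under-specified: saying that Gallagher ``produces multiple characters of $\Irr(G|K)$ sharing a common codegree'' requires knowing that the relevant $\lambda$ extends to $I_G(\lambda)$ and that the resulting characters have equal kernels, and neither is automatic (for instance when $r=2$ and $G'$ is non-abelian). The paper sidesteps this entirely. It observes that any non-faithful $\chi\in\Irr(G|K)$ forces $\ker(\chi)K=G'$ (the only proper nontrivial normal subgroup of $G/K$), hence $G'=K\times T$ with $T=\ker(\chi)\unlhd G$, and then applies Lemma~\ref{key1} to $G/T$. Since $(G/T)'\cong K$ is minimal normal and $(G/T)/(G/T)'\cong C_p$, the only option from Lemma~\ref{key1} is $|K|=2^{\beta}$; but then $G/K$ and $G/T$ each contribute a faithful nonlinear character of codegree $2^{\beta}$, so $m'_G(2^{\beta})>1$ in addition to $m'_G(p)>1$, contradicting the $T'_k$ hypothesis. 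This single argument disposes of both of your remaining cases at once. Once every $\chi\in\Irr(G|K)$ is faithful, one is effectively back in the $C_G(K)=K$ situation (and, by the first paragraph, with $r\ne 2$), so Lemma~\ref{small2}(b) finishes exactly as you intended.
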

\begin{proof}
	Let $G$ be a counterexample with minimal order.  Therefore, we may assume  $K$ is a minimal  normal  subgroup of $G$. 	
	First of all,  we show  that   $\ker(\chi)=1$, for all $\chi\in \Irr(G|K)$. Suppose false. Then,  there exists $\chi\in \Irr(G|K)$ such that $\ker(\chi)K/K$ is a nontrivial normal subgroup of $G/K$, implying that $G'\cong K \times T$, where $T\cong \ker(\chi)$ (Note that $\ker(\chi)\not \cong G/K$, as   $|G/G'|$ is a prime). Now, we work on group $\overline{G}=G/T$, and we adopt bar convention for this quotient group. Note that $\overline{K}= \overline{G}'$. As $|G/G'|$ is prime, we deduce that  $(|\overline{G'}|,|\overline{G}/\overline{G}'|)=1$. So the hypothesis of the Lemma \ref{key1}, is provided for $\overline{G}$. Applying Lemma \ref{key1}, $|K|=|\overline{G}'|=2^{\beta}$. Hence both groups $G/K$ and $G/T$ have  faithful irreducible characters with the same codegree $2^{\beta}$ and so $m'_{G}(2^{\beta})>1$, which is a contradiction, as $m'_G(p)>1$. So $T=1$, as wanted.

	
  Hence, by Lemmas \ref{small}(c) and \ref{small2}(b), we get that   $G/K$ acts on $K$ Frobeniusly, which is a contradiction, as $G/K$ is a Frobenius group itself and it is well-known that a complement of a Frobenius group is not Frobenius.
\end{proof}

\begin{lemma}\label{c2s3} Let $G$ be a finite solvable $T'_k$-group for some integer $k\geq 1$,  and $K<G'<G$ be a normal series of $G$, such that $G/K\cong C_2\times S_3$. Then $K=1$.
\end{lemma}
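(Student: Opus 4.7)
My plan is to argue by minimal counterexample, adapting the strategy of Lemma \ref{key3}. Assume $K\neq 1$ and $|G|$ is smallest possible; then $K$ is a minimal normal subgroup of $G$, hence an elementary abelian $q$-group of order $q^a$ for some prime $q$ and integer $a\geq 1$. A direct inspection of $C_2\times S_3$ gives $\cod(G/K)=\{1,2,3,6\}$ with multiplicities $1,3,1,1$ respectively; since codegrees are inherited by $G$ and $G$ is a $T'_k$-group, this pins $d_0=2$ with $k\geq 3$, and forces $m'_G(d)=1$ for every $d\neq 2$. In particular no $\chi\in\Irr(G|K)$ can have codegree $3$ or $6$.

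The first main step is to show that every $\chi\in\Irr(G|K)$ is faithful. If not, put $T=\ker(\chi)\neq 1$; then $T\cap K=1$ by minimality of $K$, and $TK/K$ is a proper nontrivial normal subgroup of $C_2\times S_3$, so $TK/K\in\{C_2,C_3,C_6,S_3\}$. In each case I would pass to $\bar G=G/T$, which carries $\bar K=KT/T\cong K$ as a normal subgroup with $\bar G/\bar K$ a proper quotient of $C_2\times S_3$, and would derive a contradiction by matching a codegree of $\bar G$ with one already present in $G/K$. When $TK/K=C_3=G'/K$ and $q$ is odd, one has $T\leq G'$, $\bar G'=\bar K$ is minimal normal, $\bar G/\bar G'\cong C_2^2$ and $(|\bar G'|,|\bar G/\bar G'|)=1$; Lemma \ref{key1} then forces $\bar G\cong C_2\times S_3$, so $G/T$ and $G/K$ both contribute a faithful character of codegree $6$ to $G$, against $m'_G(6)=1$. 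The remaining subcases (including $q=2$) are to be handled by analogous reductions, appealing to Lemma \ref{nilpotent} when $\bar G$ becomes a non-abelian $2$-group (forcing $\bar G$ extra-special and hence $|K|=2$), or by directly producing a second character in $G$ of codegree in $\{3,6\}$.

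Once faithfulness is secured, Lemma \ref{small}(c) gives $\cod(\chi)\neq 2,3$ for every $\chi\in\Irr(G|K)$, and Lemma \ref{small2}(b) applies with $r=q$ and $n=2$. This forces $|G/K|_q=q^{2c}$ and a Frobenius action of a Hall $q'$-subgroup of $G/K$ on $K$. A brief case split on $q$ rules out most primes: for $q\geq 5$ the would-be Frobenius complement is the full $C_2\times S_3$, whose Sylow $2$-subgroup $C_2\times C_2$ is non-cyclic and cannot be a Frobenius complement; for $q=3$ the value $|G/K|_3=3$ is not a square. Only $q=2$ survives, giving $|K|=2^a$ with the $C_3\leq S_3$-factor acting fixed-point-freely on $K$, forcing $a$ to be even.

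In the surviving case, Lemma \ref{small2}(a) combined with the fixed-point-free $C_3$-action forces $I_G(\lambda)/K\in\{1,\,C_2\times C_2\}$ for every nontrivial $\lambda\in\Irr(K)$, so $G$-orbits on $\Irr(K)\setminus\{1\}$ have size $12$ or $3$; Lemma \ref{distinct} yields exactly one $\chi\in\Irr(G|K)$ per orbit, of codegree $|K|=2^a$ or $2|K|=2^{a+1}$ respectively. The bound $m'_G(d)\leq 1$ for $d\neq 2$ allows at most one orbit of each size, and the identity $12m+3n=2^a-1$ narrows the options to $(m,n,a)\in\{(0,1,2),(1,1,4)\}$. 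I would finish by inspecting the finitely many groups of order $48$ or $192$ of the prescribed normal shape: for $a=2$ the natural candidate is $C_2\times S_4$, whose codegrees $8$ and $16$ each occur with multiplicity $2$, contradicting $T'_k$; the case $a=4$ requires an analogous but longer inspection. The main obstacle will be the faithfulness step: unlike the Frobenius quotient of Lemma \ref{key3}, where $TK/K$ has a single option, here $C_2\times S_3$ has four proper nontrivial normal subgroups, each demanding its own reduction, and in particular the cases $TK/K\in\{C_2,C_6\}$ produce $\bar G/\bar G'\not\cong C_p^\alpha$, blocking any clean appeal to Lemma \ref{key1}.
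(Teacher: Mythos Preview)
Your overall architecture matches the paper's, but the faithfulness step---which you yourself flag as the main obstacle---is a genuine gap, and the paper does \emph{not} resolve it the way you propose. The paper never proves that every $\chi\in\Irr(G|K)$ is faithful. For $T=\ker(\chi)\in\{C_3,C_6,S_3\}$ it does not pass to $G/T$; instead it extracts the Sylow $3$-part $T_0=T\cap G'\cong C_3$, observes $G'=T_0\times K$, and applies Lemma~\ref{key1} to $G/T_0$. This disposes of all three cases simultaneously and yields $|K|=3$, whence $G/K$ and $G/T_0$ each contribute a faithful degree-$2$ character of $G$ of codegree $6$, contradicting $m'_G(6)=1$. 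The case $T\cong C_2$ is \emph{never excluded}: the paper sets $S=\prod_{\chi\in\Irr(G|K)}\ker(\chi)\in\{1,C_2\}$ and passes to $\overline G=G/S$, where faithfulness holds by construction and $\overline G/\overline K\in\{S_3,\,C_2\times S_3\}$; Lemmas~\ref{small}(c) and~\ref{small2}(b) then apply in $\overline G$. Your plan to kill $T\cong C_2$ directly would require analysing a $T'_l$-group with quotient $S_3$ and minimal normal $\overline K$---precisely the content of Lemmas~\ref{s41} and~\ref{s3}, which come later and are unavailable here.

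For $q=2$ the paper's endgame is also different and cleaner than your inspection of orders $48$ and $192$. First, it isolates the sub-case where $C_G(K)$ contains a Sylow $3$-subgroup, which forces $K\le\mathbf Z(G)$, $|K|=2$, $|G|=24$, handled by direct inspection; your orbit count misses this because then $3$ divides $|I_G(\lambda)/K|$ and Lemma~\ref{small2}(a) does not give a $2$-group. Otherwise, after quotienting by $S$, one gets $I_{\overline G}(\lambda)/\overline K\in\{1,C_2^2\}$ and hence $\cd(\overline G|\overline K)\subseteq\{6,12\}$, disjoint from $\cd(\overline G/\overline K)\subseteq\{1,2\}$. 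Since the faithful characters over $\overline K$ have pairwise distinct codegrees (hence distinct degrees) and $\overline G/\overline K$ has a single nonlinear character, $\overline G$ is a $D_0$-group, immediately contradicting the Berkovich--Chillag--Herzog classification; no case-by-case construction of candidate groups is needed.
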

\begin{proof}
	Let $G$ be a counterexample with minimal order. Then, we may assume $K$ is a minimal  normal  subgroup of $G$ of order $p^{\alpha}$, for some prime $p$ and integer $\alpha$.  Let $T=\ker(\chi)$ for some $\chi\in \Irr(G|K)$.   Then, $T$ is isomorphic to one of the groups in  $ \{S_3, C_6,  C_2, C_3, 1\}$.
	
	Let $p\not =2$. Assume $T\in\{S_3, C_6, C_3\}$. Then $G'= T_0\times K$, where $T_0=T\cap G'\cong C_3$. Then,   $G/T_0$ satisfies the hypothesis of  Lemma \ref{key1}. Therefore, $G/T_0\cong C_2\times S_3$ and $K\cong C_3$. Note that, $m'_G(2)>1$ and both $G/T_0$ and $G/K$ contain faithful irreducible characters whose codegrees are $3$. So $m'_G(3)>1$, a contradiction.   Therefore,  $T\cong C_2$ or $1$. Set $S=\Pi_{\chi_0\in \Irr( G| K)}\ker(\chi_0)$.  Now, we consider the group $\overline{G}=G/S$ and we adopt the bar convention for this quotient group. Then  $\ker(\chi_1)=1$, for all $\chi_1\in \Irr(\overline{G}|\overline{K})$.  Note that $\overline G/\overline K\cong S_3$ or $C_2\times S_3$.   By Lemmas \ref{small}(c) and \ref{small2}(b),   $\overline{G}/\overline{K}$ acts on $\overline{K}$, Frobeniusly, a contradiction, as $\overline G/\overline K\cong S_3$ or $C_2\times S_3$.
	
	Finally, let $p=2$.  If $C=C_G(K)$ contains a Sylow $3$-subgroup of $G$, then $K\leq  {\bf Z}(G)$ and so $|K|=2$. Hence, $G$ has a normal abelian subgroup $L$ of order  $12$.  Looking at the classification of groups of order 24, we see  $G\cong D_{24}$ or $G\cong $SmallGroup$(24,8)$. In the first case  there exists $2$ distinct  faithful characters with the same degree $2$, a contradiction, as $m'_G(2)>1$. In the second case $S=$Socle$(G)=C_3\times C_2$,  so  every $\chi\in \Irr(G|\lambda)$ is faithful, and  $I_G(\lambda)/S$ is a group of order $2$, for all $\lambda\in \Irr(S)$ of order $6$. Then by Gallagher's theorem, there is two  distinct faithful characters of degree $2$ above each irreducible character $\lambda\in \Irr(S)$ of order $6$, a contradiction.

	So $C$ does not contain a Sylow $3$-subgroup of $G$ and hence  $T\cong C_2$ or 1.  Let $S=\Pi_{\chi\in \Irr(G|K)}\ker(\chi)$. Note that  $S\cong C_2$ or 1.  Setting  $\overline{G}=G/S$, we see that $\ker(\chi)=1$, for all $\chi\in \Irr(\overline{G}|\overline{K})$. Then by Lemmas \ref{small}(c) and \ref{small2}(a),  $I_{\overline G}(\lambda)/\overline K\cong C_2^2$ or $1$, for all $\lambda\in \Irr(\overline K)$.   Applying Lemma \ref{distinct}, we have $\cd(\overline G|\overline K)\subseteq \{6, 12\}$. Since $\cd(\overline G|\overline K)\cap \cd(\overline G/\overline K)=\emptyset$, then  $\overline G$ must be 
	a $T_1$-group($D_0$-group).
	Looking at the classification of the $D_0$-groups(see \cite{D_0}) we get a contradiction, as $\overline G\cong S_3$ or $C_2\times S_3$.
\end{proof}

\begin{lemma}\label{s41} Let $G$ be a solvable $T'_k$-group for some $k\geq 1$, and $K <G'<G$ be a normal series of $G$ such that $G/K\cong S_3$ and $K$ is a minimal normal  subgroup of $G$ of order $r^{\alpha}$, for some prime $r\not =3$. Then $G\cong S_4$.
\end{lemma}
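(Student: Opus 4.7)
Let $G$ satisfy the hypotheses of the lemma. The plan is to progressively constrain the structure of $G$ until $G\cong S_4$ is forced. The first move is to analyze the possible kernels of characters $\chi\in\Irr(G|K)$. Since $K$ is a minimal normal subgroup, $\ker(\chi)\cap K=1$, so $\ker(\chi)$ embeds in $G/K\cong S_3$ as a normal subgroup and hence lies (up to isomorphism) in $\{1,C_3,S_3\}$. The case $\ker(\chi)\cong S_3$ would give $G=K\times S_3$, forcing $G'\cong C_3$ and $K\not\leq G'$, contradicting the hypothesis; so only $\ker(\chi)\in\{1,C_3\}$ remains.

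The main obstacle is to eliminate the possibility that some $\chi\in\Irr(G|K)$ has $\ker(\chi)\cong C_3$. If it did, then $\ker(\chi)\cdot K=G'$ and $G'\cong K\times C_3$, and passing to $\overline G=G/\ker(\chi)$ produces a $T'_l$-group (quotients of $T'_k$-groups being $T'_l$-groups) with $\overline G'=\overline K$ minimal normal and $|\overline G/\overline G'|=2$. When $r=2$, $\overline G$ is a $2$-group with abelianization of order $2$, hence cyclic, and so $\overline G'=1\neq \overline K$, a contradiction. When $r$ is odd (forcing $r\geq 5$, since $r\neq 3$), Lemma \ref{key1} applies to $\overline G$; of its three possible conclusions only $\overline G\cong D_{2r}$ with $r$ prime fits $\overline G'\cong C_r^\alpha$ (the other two have derived subgroup a $3$-group or a $2$-group). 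Hence $|K|=r$, $|G|=6r$, and a Schur--Zassenhaus analysis of the action of $G/G'\cong C_2$ on $G'\cong C_{3r}$ identifies $G$ with $D_{6r}$. A direct Clifford-theoretic computation then gives $\cod(D_{6r})=\{1,2,3,r,3r\}$ with multiplicities $\{1,1,1,(r-1)/2,r-1\}$; for $r\geq 5$ both $(r-1)/2$ and $r-1$ exceed $1$, so $D_{6r}$ is not a $T'_k$-group, a contradiction.

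Every $\chi\in\Irr(G|K)$ is therefore faithful, so $|K|$ divides $\cod(\chi)$ by Lemma \ref{small}(b). Since $C_G(K)/K\trianglelefteq S_3$, the centralizer $C_G(K)$ is $K$, $G'$, or $G$; the latter two force $G'/K\cong C_3$ to act trivially on $K$, yielding $G'\cong K\times C_3$ via Schur--Zassenhaus, already excluded. Thus $C_G(K)=K$ and $K$ is a faithful irreducible $\mathbb F_r[S_3]$-module. Classifying these (using semisimplicity of $\mathbb F_r[S_3]$ for $r\geq 5$, and the two $2$-modular irreducibles for $r=2$) leaves only the $2$-dimensional standard module, so $\alpha=2$ and $|G|=6r^2$.

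It remains to rule out $r\geq 5$ and to identify $G$ when $r=2$. For $r\geq 5$, an orbit count of $S_3$ on $K\setminus\{0\}$ (realized as triples summing to zero) gives $r-1$ orbits of size $3$ (stabilizer a transposition) and $(r-1)(r-2)/6$ orbits of size $6$ (trivial stabilizer). Clifford theory, with cyclic (or trivial) inertia quotients, then produces $2(r-1)$ faithful characters of degree $3$ and codegree $2r^2$ together with $(r-1)(r-2)/6$ faithful characters of degree $6$ and codegree $r^2$; for $r\geq 5$ both multiplicities are at least $2$, contradicting the $T'_k$-hypothesis. Hence $r=2$, $K=V_4$, and $|G|=24$. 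Then $G'=V_4\rtimes C_3\cong A_4$ by Schur--Zassenhaus (the $C_3$-action being nontrivial, since otherwise $K\leq \Center(G)$ would force $|K|=2$). Because $\Center(A_4)=1$ and $|\Out(A_4)|=2$, the extension $G$ is either $A_4\times C_2$ or $S_4$; the former is excluded because $(A_4\times C_2)/V_4\cong C_6\not\cong S_3$, leaving $G\cong S_4$, as required.
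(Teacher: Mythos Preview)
Your argument is correct, and after the common first step (reducing to $C_G(K)=K$, i.e.\ faithfulness of $\Irr(G|K)$) it takes a genuinely different route from the paper.

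For the reduction itself, the paper also passes to $G/T$ with $T\cong C_3$ and applies Lemma~\ref{key1} to obtain $|K|=r$ prime, but then appeals to Lemma~\ref{key3}; your direct identification $G\cong D_{6r}$ together with the explicit codegree count $m'_G(r)=(r-1)/2$, $m'_G(3r)=r-1$ is a clean self-contained alternative, and you handle $r=2$ separately via the Burnside basis theorem, which the paper dispatches more tersely.

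The substantive divergence is in the endgame. Once $C_G(K)=K$, the paper argues combinatorially: orbit sizes on $\Irr(K)$ lie in $\{3,6\}$, the $T'_k$-hypothesis forces the orbit-count pair $(n_3,n_6)$ into $\{(k,0),(k,1),(0,k)\}$, and Lemma~\ref{size Nq} is then used to pin down $|K|$ in each case --- including a somewhat delicate elimination of $|K|=16$ in the $(k,1)$ case. You instead observe that $K$ is a faithful irreducible $\mathbb F_r[S_3]$-module and classify these (trivial, sign, and the $2$-dimensional standard module for $r\geq 5$; trivial and the $2$-dimensional module for $r=2$), which forces $\alpha=2$ at once. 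This bypasses the $(n_3,n_6)$ case split and the $|K|=16$ analysis entirely; the trade-off is that it relies on knowing the modular representation theory of $S_3$, whereas the paper stays within the orbit-counting framework of Lemmas~\ref{small2} and~\ref{size Nq} used elsewhere. Your explicit orbit count for $r\geq 5$ (yielding $2(r-1)$ faithful characters of codegree $2r^2$ and $(r-1)(r-2)/6$ of codegree $r^2$) then gives the contradiction directly. The final identification of $G$ for $r=2$ via $\Center(A_4)=1$ and $|\Out(A_4)|=2$ is also more structural than the paper's, which simply reads off $G\cong S_4$ from $|K|=4$ in the $(k,0)$ case.
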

\begin{proof}
	We show that $C_G(K)=K$. Suppose false, then $G'=T\times K$, where $T\cong C_3$.  As $|G/G'|=2$, we deduce that $r\not =2$. Therefore,   $G/T$ satisfies Lemma \ref{key1}, and so $|K|=r$ is an odd prime other than $3$. By Lemma \ref{key3} we get a contradiction. Hence $C_G(K)=K$. Then $\chi$ is faithful for all $\chi \in \Irr(G|K)$. Remark that, $I_G(\lambda)/K$ does not contain  the Sylow $3$-subgroup of $G/K$,  for any $\lambda\in \Irr(K)$, as otherwise ${\bf Z}(G')\cap K\not =1$, contradicting the fact that $C_G(K)=K$.  Hence, $I_G(\lambda)/K\cong 1 $ or $C_2$,  for every $\lambda\in \Irr(K)$,  implying that $G$-orbit sizes of  nontrivial elements of $K$ are  either $3$ or $6$.    As  Sylow subgroups of $I_G(\lambda)$ are cyclic, $\lambda$ extends to $I_G(\lambda)$,  for all $\lambda \in \Irr(K)$.  If $I_G(\lambda)/K\cong C_2$,  then  $m'_G(|I_G(\lambda)|)>1$ by Lemma \ref{small}(a). Also, if $I_G(\lambda_1)=I_G(\lambda_2)=K$, for two non-conjugate  nontrivial irreducible  characters of $K$, then $m'_G(|K|)>1$.
	
	Let $n_i$ be the number of $G$-orbits of size $i$ of elements of $K\cong\Irr(K)$.  As  $G$ is a $T'_k$-group, by the above discussion, we have  the following possibilities for the  number of $G$-orbit sizes  of  nontrivial elements of $\Irr(K)$: $(n_3, n_6)\in \{(k,0), (k,1), (0,k)\}$, for some integer $k$. The last case yields that $G/K$ acts on $K$ Frobeniusly, which is a contradiction, as $G/K$ is a Frobenius group.
	If the second case occurs, then applying Lemma \ref{size Nq}, we get that $r^{\alpha}-7=n_2(r^{\beta}-1)=3(r^{\beta}-1)$, for some integer $\beta$. Whence, $r^{\beta}(r^{\alpha-\beta}-3)=4$, implying that $|K|=r^{\alpha}=16$. On the other hand, by Lemma \ref{small2}(b) the Sylow $3$-subgroup of $G/K$ acts on $K$, Frobeniusly,  yielding that $G$ has a normal Frobenius subgroup of order $48$, say $L$.  Then $L$ has exactly  5 distinct irreducible  characters of degree $3$. Since $|G/L|=2$, these five characters belong to at least 3 distinct $G$-orbits and the size  of one of these orbits  must be $1$. Hence one of the described  characters extends to $G$ and we have two distinct faithful  irreducible characters with the same codegree $|G|/3$.  If there exist  two $G$-orbits of size $2$ containing one of the described characters, then we have two distinct faithful irreducible characters of codegree $|G|/6$, which is a contradiction. If there exists only  one $G$-orbit with size $2$, then it follows that we have one faithful irreducible character of degree $6$ and $8$ distinct faithful irreducible characters of degree $3$. On the other hand $|G|=96< 6^2+ 8.3^2$, a contradiction. So all of these five  irreducible characters of $L$ belong to $G$-orbits of $G$ of size $1$ and so  all of the faithful irreducible characters of $G$ have the same degree $3$. But we have  $n_6=1$, which means that there is a character $\lambda\in \Irr(K)$, such that $\lambda^{G}\in \Irr(G)$,  a contradiction as $\lambda^G(1)=6$.

	Then the first case occurs, and  applying Lemma \ref{size Nq}, we have $|K|=4$ and $G\cong S_4$, as desired.
\end{proof}

\begin{lemma}\label{s42}Let $G$ be a finite solvable $T'_k$-group, where $k\geq 1$, and $M <G'<G$ be a normal series of $G$ such that $G/M\cong S_4$. Then $M=1$.
\end{lemma}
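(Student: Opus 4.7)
The plan is a minimal counterexample argument. Assume $G$ is a $T'_k$-group with $M\neq 1$ of minimum order; then $M$ may be chosen to be a minimal normal subgroup of $G$, hence elementary abelian of order $r^a$ for some prime $r$. Since $G/M\cong S_4$ contributes two distinct degree-$3$ characters to $\Irr(G)$ with kernel $M$ and codegree $|S_4|/3=8$, we have $m'_G(8)\ge 2$; the $T'_k$ hypothesis then forces $k=2$ with $8$ as the unique repeated codegree. In particular, the codegrees $1,2,3$ inherited from $\Irr(G/M)\cong\Irr(S_4)$ cannot be matched by any character of $\Irr(G|M)$, distinct characters of $\Irr(G|M)$ have distinct codegrees, and at most one character of $\Irr(G|M)$ has codegree $8$.

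The main step is a case analysis on $C=C_G(M)$. Because $M$ is abelian, $M\le C$ and $C/M$ is a normal subgroup of $G/M\cong S_4$, so $C/M\in\{1,V_4,A_4,S_4\}$; I will exhibit in each case at least two distinct codegrees of multiplicity $\ge 2$, contradicting the constraints above. When $C=G$, $M\le \Center(G)$ and $|M|=r$: for odd $r$ the Schur multiplier of $S_4$ has no $r$-part, so $G\cong S_4\times C_r$ and the characters $1_{S_4}\otimes\psi$ for $\psi\in\Irr(C_r)\setminus\{1\}$ share codegree $r$, giving $m'_G(r)\ge r-1\ge 2$; for $r=2$ the three central extensions ($S_4\times C_2$, $GL_2(3)$, and the binary octahedral group) each have several repeated codegrees by direct inspection. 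When $C/M=A_4$, $G/C\cong C_2$ acts on $M$ by inversion, forcing $r$ odd, $|M|=r$, and $C\cong A_4\times M$ by Schur--Zassenhaus; each of the $(r-1)/2$ non-trivial $G$-orbits of $\Irr(M)$ contributes four Clifford-induced characters of codegrees $\{r,3r,3r,4r\}$, so $m'_G(3r)\ge 2$ with $3r\ne 8$.

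When $C/M=V_4$, $G/C\cong S_3$ acts faithfully on $M$, so $M$ is a faithful simple $\mathbb{F}_r[S_3]$-module: this eliminates $r=3$, yields for $r\ge 5$ (via the canonical decomposition $C=M\times V_4$ with $V_4$ the Sylow $2$-subgroup of $C$) at least $2(r-1)$ characters of codegree $2r^2$ factoring through $G/V_4\cong M\rtimes S_3$, and for $r=2$ (with $|M|=4$, $|G|=96$) Lemma~\ref{small2}(a) applied with $n=8$ to the unique non-trivial orbit on $\Irr(M)$ (whose stabilizer in $S_4$ has order $8=2^3$, not a power of $2^2$) forces at least one additional character of codegree $8$ in $\Irr(G|M)$, giving $m'_G(8)\ge 3$. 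When $C=M$, the identity $[\ker\chi,M]\le\ker\chi\cap M=1$ forces $\ker\chi\le C=M$, so every $\chi\in\Irr(G|M)$ is faithful and Lemma~\ref{small}(b) gives $|M|\mid\cod(\chi)$; this together with Lemma~\ref{small2}(b) eliminates $r=2$ (no faithful simple $\mathbb{F}_2[S_4]$-module) and $r\ge 5$ (the Hall $r'$-subgroup of $S_4$ is $S_4$ itself, which is not a Frobenius complement since it contains a non-cyclic $V_4$), while $r=3$ is ruled out by applying Lemma~\ref{small2}(a) to a size-$12$ orbit of the standard $S_4$-action on $\Irr(M)$ (stabilizer of order $2$, not a power of $9$), which demands a codegree-$8$ character but is impossible since $|M|=27\nmid 8$.

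The main obstacle is the $r=2$ sub-case of $C/M=V_4$: the extension $M\hookrightarrow C\twoheadrightarrow V_4$ need not split, so one cannot directly appeal to the structure of $G/V_4$. Nevertheless Lemma~\ref{small2}(a) with the exceptional value $n=8$ still applies, and the key step is to show that either outcome --- a non-faithful character with kernel $V_4$ (available only in the split case, where $G/V_4\cong S_4$ forces its codegree to equal $8$) or a faithful character of codegree $8$ --- contributes a further character of codegree $8$, raising $m'_G(8)$ to $3$.
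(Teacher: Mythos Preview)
Your case analysis on $C_G(M)/M$ is a reasonable alternative to the paper's argument, and most cases are handled correctly. However, there is a genuine gap at the outset that breaks the $r=2$ subcase of $C/M\cong V_4$.

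You assert that the $T'_k$ hypothesis ``forces $k=2$'' and that ``distinct characters of $\Irr(G|M)$ have distinct codegrees, and at most one character of $\Irr(G|M)$ has codegree $8$.'' This is not justified. From $m'_G(8)\ge 2$ one concludes only that $8$ is the unique repeated codegree and $k=m'_G(8)\ge 2$; nothing prevents further characters in $\Irr(G|M)$ from having codegree $8$ and pushing $k$ higher. Consequently, your endgame in the $C/M\cong V_4$, $r=2$ case --- arguing (via the contrapositive of Lemma~\ref{small2}(a), and in the final paragraph) that some $\chi\in\Irr(G|M)$ is forced to have codegree $8$, hence $m'_G(8)\ge 3$ --- is \emph{not} a contradiction. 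A $T'_3$-group with repeated codegree $8$ is perfectly consistent with the hypothesis. To kill this case you must exhibit a codegree $d\neq 8$ with $m'_G(d)\ge 2$, and your argument does not do this.

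By contrast, the paper avoids this trap by first proving that $M$ is the unique minimal normal subgroup (so every $\chi\in\Irr(G|M)$ is faithful), and then carrying out a short arithmetic analysis to show that \emph{no} $\chi\in\Irr(G|M)$ can have codegree $8$: writing $8=|M|\cdot|I_G(\lambda)|/(|M|\theta(1))$ via Lemma~\ref{small}(a) and using that a Sylow $2$-subgroup of $G'/M$ lies in $I_G(\lambda)/M$, one is reduced to $(|M|,|I_G(\lambda)/M|,\theta(1))=(4,4,2)$, which Remark~\ref{orbit} excludes. Once $\cod(\chi)\neq 8$ for all $\chi\in\Irr(G|M)$, Lemma~\ref{small2}(b) applies globally and forces $S_4=G/M$ to act Frobeniusly on $M$, an immediate contradiction. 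This ``rule out codegree $8$ first'' step is exactly what your approach is missing in the problematic subcase.

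A minor further point: in the $C/M\cong A_4$, $r=3$ case, the complement $A_4$ in $C\cong A_4\times C_3$ is not obviously normal in $G$ (there are three such complements, possibly permuted by $G/C\cong C_2$), so the kernel you record as $A_4$ may actually have core $V_4$. This does not affect the conclusion --- the codegree list becomes $\{9,9,9,12\}$ rather than $\{3,9,9,12\}$, and $m'_G(9)\ge 2$ with $9\ne 8$ still yields the contradiction --- but the claimed list $\{r,3r,3r,4r\}$ is not quite right in that instance.
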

\begin{proof}
	Let $G$ be a counterexample with minimal order. Then we may assume $M$ is a minimal normal subgroup of $G$ of order $|M|=r^{\alpha}$, for some prime $r$ and integer $\alpha$.
	We  show that $M$ is the only minimal normal subgroup of $G$. Suppose false, then Socle$(G)\not= M$, so Socle$(G)=PM\cong M \times P$, where $PM/M$ is the  Sylow $2$-subgroup of $G'/M$. First, let $r\not =3$.  Looking at $G/P$ and applying  Lemma \ref{s41}, we get that $M \cong C_2^2$. If we take $\lambda\in \Irr(M)$ and $\lambda'\in \Irr(P)$ to be two nontrivial characters, then all ellements in  $\Irr(G|\lambda\times \lambda')$ are faithful. Note that $m'_G(8)>1$. Now, using Lemma \ref{small}(b),  the  codegrees of all characters in $\Irr(G|\lambda\times \lambda')$ are divided by $|PM|=16$. Hence, by Lemma \ref{small2}(a), we get that $I_G(\lambda\times \lambda')=PM$. Note that all such characters must be conjugate, as otherwise we have at least two distinct faithful irreducible characters with the same degree $6$, and so $m'_G(|G|/6)>1$, by Lemma \ref{small}(a), a contradiction.  So, $9=(|P|-1)(|M|-1)=|G:I_G(\lambda\times \lambda')|=6$, a contradiction.  Now, assume $r=3$. Then by \cite{ebrahimi}, $G/P$ is not a $T'_1$-group and so, $m'_G(d)>1$ for some integer $d\not =8$ ($d$ must divide $|G/P|$). Since in  $G/K\cong S_4$, multiplicity of $8$ is not trivial we get a contradiction.  Therefore, our claim is proved and $\chi$ is faithful for all $\chi\in \Irr(G|M)$.

	Now, we show that $\cod(\chi)\not =8$, for all $\chi\in \Irr(G|M)$. Assume $\cod(\chi)=8$, for some $\chi\in \Irr(G|M)$. Then, $8=|M|\times |I_G(\lambda)|/(|M|\theta(1))$, where $1_M\not =\lambda\in \Irr(M)$, $\theta\in \Irr(I_G(\lambda)|\lambda)$ and $\chi\in \Irr(G|\lambda)$, by Lemma \ref{small}(a). So, $M$ is a $2$-group and $P/M\leq I_G(\lambda)/M$, where $P/M$ is the Sylow $2$-subgroup of $G'/M$.
	
	Then, $(|M|, |I_G(\lambda)|/|M|, \theta(1))\in \{(8,1,1), (4,2,1), (4,4,2), (2,8,4) \}$. The first two cases are not possible as $P/M\leq I_G(\lambda)/M$.  The last case is not possible, as by \cite[page 84]{isaac}, $16=\theta(1)^2 \leq |I_G(\lambda)|/|M|=8$. So, the only possibility in this case is $|M|=|I_G(\lambda)/M|=4$ and $\theta(1)=2$. In this case $4=|M|<|G:I_G(\lambda)|=6$, contradicting Remark \ref{orbit}. So our claim is proved and $G$ satisfies hypothesis of Lemma \ref{small2}(b). Therefore $G/M\cong S_4$ acts on $M $ Frobeniusly, which is a contradiction.	
\end{proof}

\begin{lemma}\label{s3}  Let $G$ be a finite  solvable $T'_k$-group, for some integer $k\geq 1$ and $|G/G'|=2$. Then either $G\cong S_4$;   $G\cong D_{18}$; or $G$ is a Frobenius group  isomorphic to $C_p^{\beta}\rtimes C_2$, for some odd prime $p$ and integer $\beta$.
\end{lemma}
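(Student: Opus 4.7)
I would proceed by induction on $|G|$, with $G$ a minimal counterexample. A first preliminary observation is that since $|G/G'|=2$, every irreducible character of codegree $2$ must be linear: if $\chi\in\Irr(G)$ is non-linear with $\cod(\chi)=2$, then $|G/\ker(\chi)|=2\chi(1)=4$ would have to admit a two-dimensional irreducible character, which is impossible. Hence $m'_G(2)=1$ and the distinguished codegree $d_0$ of $G$ satisfies $d_0\neq 2$. I pick a minimal normal subgroup $N$ of $G$ contained in $G'$ (possible because $G$ is non-abelian). Then $(G/N)/(G/N)'\cong G/G'\cong C_2$, and since $m'_{G/N}(d)\leq m'_G(d)$ for every codegree $d$, $G/N$ is a $T'_l$-group for some $l\geq 1$.

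If $G'=N$, i.e. $G'$ is minimal normal, then Lemma \ref{nilpotent} excludes $|G'|$ from being even (otherwise $G$ would be an extra-special $2$-group with $|G/G'|\geq 4$), so $|G'|$ is odd and Lemma \ref{key1} applies. Among its three listed outcomes only $G\cong D_{2q}=C_q\rtimes C_2$ for an odd prime $q$ is compatible with $|G/G'|=2$, and this is a Frobenius group from the conclusion list, contradicting the counterexample assumption. So $N\subsetneq G'$, $G/N$ is non-abelian, and the induction hypothesis gives $G/N\in\{S_4,\ D_{18},\ C_p^{\beta}\rtimes C_2\text{ Frobenius with }p\text{ odd}\}$ (the choice $p=3$, $\beta=1$ giving $S_3$).

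When $G/N\cong S_4$, Lemma \ref{s42} immediately yields $N=1$, a contradiction. When $G/N\cong S_3$ and $|N|$ is coprime to $3$, Lemma \ref{s41} yields $G\cong S_4$, a listed group; if instead $|N|$ is a power of $3$, a parallel direct analysis produces $G\cong D_{18}$ or $G\cong C_3^{\alpha+1}\rtimes C_2$. For the remaining sub-cases $G/N\cong D_{18}$ and $G/N\cong C_p^{\beta}\rtimes C_2$ with $p^{\beta}>3$, the distinguished codegree of $G/N$ (respectively $9$ and $p^{\beta}$) has multiplicity $\geq 2$, so it must equal $d_0$; consequently every $\chi\in\Irr(G|N)$ has codegree either $d_0$ or a new value of multiplicity $1$. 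Lemma \ref{small}(b),(c) provides $|N|\mid\cod(\chi)$ and excludes small codegrees of $G/N$. After verifying that every such $\chi$ is faithful — if not, $G/\ker(\chi)$ is a smaller $T'_l$-group with the same hypotheses, contradicting minimality by induction — Lemma \ref{small2}(b) applies to force the Hall $r'$-subgroup of $G/N$ to act Frobeniusly on $N$ and its Sylow $r$-subgroup to have order $r^{2c}$. Matching this against the Frobenius structure of $G/N$ itself pins down $r=p$ and reassembles $G$ as the listed Frobenius group $C_p^{\alpha+\beta}\rtimes C_2$ in the Frobenius sub-case, while in the $D_{18}$ sub-case the analogous assembly fails (since $D_{18}$'s kernel $C_9$ is not elementary abelian), forcing $N=1$ and $G\cong D_{18}$ — both contradictions.

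The principal obstacle lies in the final two sub-cases: simultaneously tracking the codegree multiplicities in $G$, verifying that every character of $\Irr(G|N)$ is faithful (to access Lemma \ref{small2}(b)), controlling possible collisions between codegrees coming from $\Irr(G/N)$ and from $\Irr(G|N)$, and translating the resulting Frobenius-action conclusion into a structural identification of $G$. The $D_{18}$ sub-case is especially sensitive, because the listed Frobenius family $C_p^{\beta}\rtimes C_2$ cannot extend $D_{18}$ (whose Frobenius kernel is the cyclic group $C_9$, not elementary abelian), so the $T'_k$ multiplicity constraint must exclude every non-trivial $N$ by a direct arithmetic clash.
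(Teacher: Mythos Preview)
Your inductive setup is reasonable, but there is a genuine gap at the faithfulness step. You assert that if some $\chi\in\Irr(G|N)$ has $\ker(\chi)\neq 1$ then ``$G/\ker(\chi)$ is a smaller $T'_l$-group with the same hypotheses, contradicting minimality by induction.'' This is not a contradiction: minimality of $G$ among counterexamples only tells you that the smaller quotient $G/\ker(\chi)$ \emph{is} one of the listed groups. That information is useful but does not by itself finish the argument; for example, if $G/N\cong C_p^{\beta}\rtimes C_2$ and $K=\ker(\chi)\cong C_p^{\gamma}$ with $0<\gamma<\beta$, then $G/K$ may also land in the list, and you must still compare the codegree multiplicities coming from $G/N$ and from $G/K$ to produce a clash. (Concretely, in $G=C_p^{2}\rtimes C_2$ with the inversion action, \emph{no} character in $\Irr(G|N)$ is faithful for a minimal normal $N\cong C_p$, so faithfulness is certainly not automatic.) A smaller slip: the repeated codegree of the Frobenius group $C_p^{\beta}\rtimes C_2$ is $p$, not $p^{\beta}$, since every nonlinear irreducible has kernel of index $2p$; this matters when you invoke Lemma~\ref{small}(b),(c).

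Even granting faithfulness and applying Lemma~\ref{small2}(b), your claim that in the case $r=p$ the group ``reassembles as $C_p^{\alpha+\beta}\rtimes C_2$'' is not justified: you only know that $N$ and $G'/N$ are elementary abelian $p$-groups and that the involution inverts $N$; this does not yet force $G'$ to be abelian, let alone elementary abelian. The paper circumvents both difficulties by a different organization. It first treats the case that $G'$ is an odd $p$-group directly: Fitting's lemma makes the involution invert $G'/G''$, a short analysis shows $G'/G''$ is elementary abelian or $C_9$, and then a separate argument forces $G''=1$. When $G'$ is not a $p$-group, the paper chooses a chief factor $M/N$ inside $G'$ of order coprime to $p$ with $M$ \emph{maximal} among all such; this maximality yields $C_{\overline{G}'}(\overline{M})=\overline{M}$ in $\overline{G}=G/N$, so every character in $\Irr(\overline{G}|\overline{M})$ is automatically faithful and Lemma~\ref{small2}(b) applies without further casework. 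Your approach can likely be completed, but doing so would essentially rebuild these two ingredients of the paper's proof.
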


\begin{proof} We aim to prove that either $G'\cong A_4$; $G'\cong C_9$ or   it  is an elementary abelian  $p$-group, for some odd prime $p$ .
	First, we assume that $G'$ is a $p$-group, for some  odd prime $p$.  We work on $\overline{ G}=G/G'' $ and we use the bar convention for this quotient group.    As $|\overline G/\overline G'|=2$, by Fitting Lemma (see \cite[Theorem 4.34]{isaacs}) we have  $\overline G/\overline G'$ acts on $\overline G'$ Frobeniusly.

	We  prove that $\overline G'$ is an elementary abelian group or a cyclic group of order $9$. Let $\overline  G'\cong \overline T\times \overline L$, where $\overline T$ is a cyclic group of order $p^m$, for some $m>1$. Note that as $\overline G/\overline G'$ acts on $\overline G'$ by inverting every element, then every subgroup of $\overline G'$ is a normal subgroup of $\overline G$. Now, we consider group $\overline G/ \overline L\cong D_{2n}$, where $n=p^m$. Since $\overline  G/\overline  L$ is a $T'_l$-group for some integer $l$, it follows that $n=9$ (notice that  there are $\phi(d)/2$ irreducible characters in $\Irr(D_{2n})$ with the same codegree $d$, for each divisor $d$ of $n$, where $\phi$ is the Euler's totient function).   If $\overline L\not =1$, then $\overline G$ has a homeomorphic image isomorphic to the Frobenius group $(C_3\times C_9)\rtimes C_2$, which is not possible, as in this case $m'_{\overline G}(9)$ and $m'_{\overline G}(3)$ are not trivial. Therefore $\overline L=1$ and $\overline G\cong D_{18}$, as desired. So either $p=3$ and  $\overline G'\cong C_9$ or it is an elementary abelian $p$-group.
	
	Now, we  show that $G''=1$. Let  $G'/G''\cong C_9$ and $G''\not =1$, then  we may assume $G''$ is  a  minimal normal subgroup of $G$  and so $G''\leq {\bf Z}(G')$, a contradiction.  Hence in this case  $G''=1$. 
	So, we assume $G'/G''$ is an elementary abelian group. Contrary to what we want to prove, assume $G''\not =1$ and  take $L$ to be a normal  subgroup of $G$, such that $G''/L$ is a chief factor of $G$. Write $\widetilde G=G/L$ and adopt the tilde convention for $\widetilde G$.  Note that $\widetilde G''\leq {\bf Z}(\widetilde G')$. We claim that $\widetilde G$ is a Frobenius group. Suppose false. Since $\widetilde G/\widetilde G'$ acts on $\widetilde G'/\widetilde G''$, Frobeniusly, we may assume   $\widetilde G/\widetilde G'$ acts on $\widetilde {G}''$, trivially and so $\widetilde G''\leq {\bf Z}(\widetilde G)$, hence $\widetilde G''\cong C_p$.  Let $\widetilde T=\Pi_{\chi\in\Irr(\widetilde G|\widetilde G'')} \ker(\chi)$. Then $\widetilde T$ is a normal subgroup of $\widetilde G$, whose intersection with $\widetilde G''$ is trivial. Note that $\widetilde T\times \widetilde G''$ is a proper subgroup of $\widetilde G'$, so the order of $\widetilde G/\widetilde G''\widetilde T$ is divided by $2p$. Also, $\ker(\chi)=1$, for all $\chi \in \Irr(\widetilde G/\widetilde T|\widetilde G''\widetilde T/\widetilde T)$.
	Note that $\widetilde G/\widetilde G''\widetilde T \not \cong D_{2p}$, as otherwise 	
	$$ \frac{\frac{\widetilde G'}{\widetilde T}}{\frac{\widetilde G''\widetilde T}{\widetilde T}}\cong \frac{\widetilde G'}{\widetilde T\widetilde G''}\cong C_p,$$
	which is not possible.
	Hence  $\widetilde G/\widetilde G''\widetilde T$ is isomorphic to the Frobenius group  $ C_p^{\beta}\rtimes C_2$, for some integer $\beta > 1$,  then $p\in \cod(\widetilde G/\widetilde G''\widetilde T)$ and $m'_G(p)>1$. Therefore, using Lemmas \ref{small}(c) and \ref{small2}(b), $\widetilde G/\widetilde G' $ acts on $\widetilde T \widetilde G''/\widetilde T\cong \widetilde G''$, Frobeniusly, a contradiction. So our claim  is proved and $\widetilde G$ is a Frobenius group.
	
	It is well-known that if the complements  of a Frobenius group are even order, then the Frobenius kernel is abelian. Hence,  $\widetilde G'$ is abelian and $\widetilde G''=1$, which means $G''=1$ as wanted.	
	
	Finally, assume  $G'$ is not a $p$-group, for some  odd prime $p$.   
	Clearly $ G'/G''$ is not divided by two distinct primes $p <q$, since otherwise $G$ has a homomorphic image isomorphic to $D_{2pq}$, a contradiction, as $m'_{D_{2pq}}(q)>1$ and $m'_{D_{2pq}}(pq)>1$. So, we may assume $\overline G'$ is a $p$-group for some odd prime $p$.
	
		Then define   $\mathcal{S}$ to  be the set of all  pairs $(M,N)$, where $N\leq M$   are normal subgroups of   $G$ contained in $G'$  and $M/N$ is a chief factor of $G$ whose order is not divided by $p$.  Suppose $(M,N)$ is a pair in $ \mathcal{S}$ such that  it has the largest  first component among all members of  $\mathcal{S}$. Remark that  $M\not = G'$, as otherwise, $|G'/G''|$ is divided by two distinct primes.

	Now, we work on $\overline{G}=G/N$ and we use bar convention for this quotient group. Let $\overline{C}=C_{\overline{G}'}(\overline{M})\cong \overline{M}\times \overline{T}$. If $\overline{T}>1$, then $\overline{C}/\overline{T}\cong \overline{M}$ is a $q$-group for some prime $q\not=p$, a contradiction, as $|\overline M|<|\overline C|$.  Since $|G/G'|=2$, we deduce that  $C_ {\overline G}(\overline M)=\overline M$ and hence $\chi$ is faithful, for all $\chi\in \Irr(\overline G|\overline M)$. First, let $|\overline G'/\overline M|>3$. Then,  $\overline G/\overline M$ is isomorphic to the Frobenius group $C_p^{\beta}\rtimes C_2$, for some integer $\beta$, as $|\overline G'/\overline M|$ is a $p$-group (notice that $\beta>1$ when $p=3$). Hence $m'_{G/M}(p)>1$.    
	
	We note that $\cod(\chi)\not =p$, for all $\chi \in \Irr(\overline G|\overline M)$ by Lemma \ref{small}(b). Hence,  $\overline G/\overline M$ acts on $\overline M$, Frobeniusly, by Lemma \ref{small2}(b).  As $\overline G/ \overline M$ is  Frobenius, we get a contradiction.	
	So we assume $G'/M\cong C_3$ and  by Lemmas \ref{s41} and \ref{s42}, $G\cong S_4$ as wanted.
\end{proof}
\begin{lemma}\label{lastone} Let $G$ be a finite  solvable $T'_k$-group for some integer $k\geq 1$, and $K<G'< G$ be  a normal series of $G$, such that $G/K$ is a $p$-group for some prime $p$. Then either $G$ is an extra-special $2$-group, or  $G$ is a Frobenius group isomorphic to $C_3^2\rtimes Q_8$.
	
\end{lemma}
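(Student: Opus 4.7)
Let $G$ be a minimal counterexample. The quotient $G/K$ inherits from $G$ the property that at most one of its codegrees has multiplicity greater than one, so it is either a $T'_l$-group or a $D'_0$-group; since $K<G'$ forces $G/K$ to be non-abelian while the $D'_0$-groups of \cite{ebrahimi} are only $C_2$ and $S_3$, $G/K$ must be $T'_l$, and Lemma \ref{nilpotent} then makes it an extra-special $2$-group --- in particular $p=2$. If $K=1$ we are done; otherwise write $|K|=r^\alpha$. The prime $r$ must be odd, for $r=2$ would make $G$ itself a $2$-group, hence extra-special by Lemma \ref{nilpotent}, and then $|G'|=2$ would contradict $1<K<G'$. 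A standard minimality argument further reduces us to the case where $K$ is a minimal normal subgroup of $G$, so $K$ is an elementary abelian $r$-group.

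The crucial step is to show every $\chi\in\Irr(G|K)$ is faithful. Suppose $T:=\ker(\chi)\neq 1$ for some such $\chi$. Since $K$ is minimal normal, $T\cap K=1$, and hence $TK/K$ is a nontrivial normal subgroup of the extra-special $2$-group $G/K$; it therefore contains the unique minimal normal subgroup $(G/K)'=G'/K$. The modular law yields $G'=K\times T_0$ with $T_0:=G'\cap T$ of order $2$. Passing to $G/T_0$, one has $(G/T_0)'\cong K$ as a minimal normal subgroup, while $(G/T_0)/(G/T_0)'\cong G/G'$ is the elementary abelian $2$-group $C_2^{2n}$ where $|G/K|=2^{2n+1}$. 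Lemma \ref{key1} applied to $G/T_0$ allows only three possibilities, whose abelianisations are $C_2^2$, $C_2$, and the odd cyclic $C_{2^\beta-1}$; only $C_2^2$ matches, forcing $n=1$ and $G/T_0\cong C_2\times S_3$. Lemma \ref{c2s3}, applied to $G$ along the normal series $T_0<G'<G$, then forces $T_0=1$, a contradiction.

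Faithfulness in hand, Lemma \ref{small}(c) gives $\cod(\chi)\neq 2$ for all $\chi\in\Irr(G|K)$ (as $2\mid|G/K|$), and $m'_G(2)\geq|G/G'|-1\geq 3$ identifies $2$ as the unique repeated codegree of $G$. Lemma \ref{small2}(b), at the odd prime $r$ with $n=2$, then forces the Hall $r'$-subgroup $G/K$ to act Frobeniusly on $K$. So $G$ is a Frobenius group with abelian kernel $K$ and complement $G/K$, an extra-special $2$-group; as Frobenius complements have cyclic or generalized quaternion Sylow $2$-subgroups and $Q_8$ is the only extra-special $2$-group meeting either description, $G/K\cong Q_8$.

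Finally, $G=K\rtimes Q_8$ Frobenius with $K\cong C_r^\alpha$ yields $(|K|-1)/8$ regular $Q_8$-orbits on $\Irr(K)\setminus\{1\}$, each inducing a faithful irreducible character of codegree $|K|$. As $2$ is the \emph{unique} repeated codegree of $G$, this multiplicity must be $1$, forcing $|K|=9$ and $G\cong C_3^2\rtimes Q_8$ --- contradicting $G$ being a counterexample. The main obstacle is the faithfulness step of the second paragraph, where reducing ``$T\neq 1$'' to a contradiction requires both Lemmas \ref{key1} and \ref{c2s3}.
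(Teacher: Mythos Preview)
Your argument is sound \emph{once} $K$ is a minimal normal subgroup of $G$, but the ``standard minimality argument'' you invoke to reach that situation does not go through, and this is where a substantial piece of the proof is hiding. If $K_0\le K$ is minimal normal with $K_0\ne K$, minimality of $|G|$ tells you only that $G/K_0$ satisfies the \emph{conclusion} of the lemma. The extra-special option forces $K=K_0$ and is harmless; but in the other option $G/K_0\cong C_3^2\rtimes Q_8$ is \emph{not} a $2$-group, so the pair $(G,K_0)$ fails the hypothesis ``$G/K$ is a $p$-group'' and you cannot simply trade $K$ for $K_0$. You are then left with the configuration ``$K_0$ minimal normal, $G/K_0\cong C_3^2\rtimes Q_8$'', and eliminating it is genuine work: one has to re-establish faithfulness of every $\chi\in\Irr(G|K_0)$ (now the obstruction sits above the $C_3^2$ layer, not an order-$2$ centre), force $K_0$ to be a $3$-group, pin down $|K_0|=9$, and finish via the $D_0$-classification. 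The paper carries this out explicitly; your plan omits it entirely. A symptom of the same gap is that you write $|K|=r^{\alpha}$ \emph{before} any reduction---at that stage $|K|$ need not be a prime power.

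Where your plan does run, the faithfulness step (using Lemmas \ref{key1} and \ref{c2s3} to kill a putative $T_0$ of order~$2$) is a clean alternative to the paper's route, and the endgame from faithfulness to $Q_8$ and $|K|=9$ matches the paper. The paper, however, never tries to keep the given $K$; it replaces it by a normal subgroup $M$ chosen so that $G/M$ is the extra-special $2$-quotient while some chief factor $M/N$ immediately below has odd order, and the maximality of $M$ gives $C_{\overline{G}'}(\overline M)=\overline M$ directly. That choice is precisely what lets the paper split the problem into ``$G/N\cong C_3^2\rtimes Q_8$'' followed by ``$N=1$'', with the second step absorbing exactly the case your minimality reduction failed to cover.
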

\begin{proof}
	Applying  Lemma \ref{nilpotent} on $G/K$, we have $p=2$. If $G'$ is a $2$-group, we are done, so we may assume  $G'$ is not a $2$-group. Define   $\mathcal{S}$ to  be a set of all  pairs $(M,N)$, where $N\leq M$   are normal subgroups of   $G$ contained in $G'$  and $M/N$ is a chief factor of $G$ whose order is not divided by $2$. Suppose $(M,N)$ is a pair in $ \mathcal{S}$ such that  it has the largest  first component among all members of  $\mathcal{S}$.  Note that $M\not =G'$, by Lemmas \ref{key1},  \ref{key3}, \ref{c2s3} and \ref{s3}, also by Lemma \ref{nilpotent}, $G/M$ is an extra-special group.

	Let $\overline G=G/N$, and let $\overline C=C_{\overline G'}(\overline M)\cong \overline M\times \overline L$. If $\overline L>1$, then $\overline C/\overline L\cong \overline M$ is a $p$-group for some $p\not=2$, a contradiction, as $|\overline M|<|\overline C|$. So,  $C_{\overline G}(\overline M)\cap \overline G'=\overline M$. Let $\overline T=\Pi_{\chi\in \Irr(\overline G|\overline M)}\ker(\chi)$. Then, as $\overline C=\overline M$, we conclude that $\overline T\cap \overline G'=1$. As $\overline{T}\cong \overline{T}\overline{M}/\overline{M}$ is a normal subgroup of $\overline{G}/\overline{M}$ whose intersection with $(\overline{G}/\overline{M})'$ is trivial, we get that $\overline T=1$.
	
	Then, $\chi$ is faithful for all $\chi \in  \Irr(\overline G|\overline M)$.
	Also $\cod(\chi)\not =2$, for all $\chi \in \Irr(\overline G|\overline M)$, by Lemma \ref{small}(c).
	So, $\overline G/\overline M$ acts on $\overline M$, Frobeniusly, by Lemma \ref{small2}(b) and $\overline G/\overline M$ must be $Q_{2^n}$, for some integer $n$. As $G/M$ is an extra-special $2$-group, we deduce that  $n=3$. Noting  that    nonlinear characters in  $\Irr(\overline G|\overline M)$ have distinct degrees ($m'_G(2)>2$ and $\cod(\chi)\not =2$ for all $\chi\in\Irr(\overline G|\overline M)$), we deduce  all nontrivial irreducible  characters of $\overline M$ are $\overline G$-conjugate and hence, $|\overline M|-1=|\overline G/\overline M|=8$ and so $\overline G$ is the described Frobenius group in the  statement of the lemma.

	Now, we prove that $N=1$. Suppose false, and assume $N$ is a minimal normal subgroup of $G$ of order $r^{a}$, for some prime $r$ and integer $a$. First, we show that $\ker(\chi)=1$, for all $\chi \in \Irr(G|N)$. Let $L=\Pi_{\chi\in \Irr(G|N)}\ker(\chi)$.  Then $LN\cong L\times N$ is a  normal subgroup of $G$.
	
	First assume, $L\cong LN/N\not =1$ and so $L\cong LN/N$ contains $M/N$. Note that  $M/N\cong C_3^2$. Then $M\cong N\times P$, where $P\cong C_3^2$,  is a normal subgroup of $G$. If $M/P$ is not a $2$-group, then replacing $(M,N)$ by $(M,P)$ in the above discussion we have $M/P\cong C_3^2$, which is not possible, since $G/P$ and $G/N$ have the same structure and so  they both are isomorphic to the Frobenius group $C_3^{2}\rtimes Q_8$, and  then $m'_G(9)>1$, which is contradicting the fact that $G$ is a $T_k'$-group. If $M/P$ is a $2$-group, then $N\leq {\bf Z}(G)$ and so $|N|=2$. Then $G/P$ must satisfy Lemma \ref{nilpotent}, which is not possible, as $|G/P|=16$.
	Therefore $L=1$ and $\ker(\chi)=1$, for all $\chi\in \Irr(G|N)$.   By Lemmas \ref{small}(c) and  \ref{small2}(b),   Sylow $p$-subgroups of $G/N$, for prime $p\not =r$, act on $N$, Frobeniusly. By the structure of  Sylow subgroups of $G/N$, we have $N$ is a $3$-group and Sylow $2$-subgroups of $G/N$, which are isomorphic to $Q_8$,   act  on $N$, Frobeniusly.  Then, $N\leq {\bf Z}(M)$  and since $M$ is the Sylow $3$-subgroup of $G$ using Lemma \ref{size Nq}, we get that $|N|=9$. Then $G$ is a Frobenius group of order $8\times 3^4$ and  $G$ has a unique minimal normal subgroup $N$. In addition,  characters in $\Irr(G|N)$ have distinct degrees, and by Lemmas \ref{small}(c) and  \ref{small2}(b),  $I_G(\lambda)/N\cong C_3^2$,  for all $\lambda \in \Irr(N)$. Also $\cd(G|N)=\{3\times 8\}$  using Lemma \ref{distinct}. Therefore, as $\cd(G/N)\cap\cd(G|N)=\emptyset$ and $G/N$ is a $D_0$-group (see \cite{D_0}), we conclude that   $G$ is a $D_0$-group,  contradicting the  main result of \cite{D_0}.
\end{proof}
\begin{lemma}\label{suz} 	Let $G$ be a  finite non-abelian   solvable $T'_k$-group for some integer $k\geq 1$,   such that $G/G'\cong C_4 $.  Then  $G\cong $Suz$(2)$ or Dic$_{12}=$SmallGroup(12,1).
\end{lemma}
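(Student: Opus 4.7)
The plan is a minimal-counterexample argument: suppose $G$ is a finite non-abelian solvable $T'_k$-group with $G/G' \cong C_4$ of minimal order such that $G\not\cong \Suz(2)$ and $G\not\cong \mathrm{Dic}_{12}$. The four linear characters of $G$, factoring through $G/G'\cong C_4$, have codegrees $1,2,4,4$, so $m'_G(4)\ge 2$ and $m'_G(1)=m'_G(2)=1$. Since $G$ is $T'_k$, the unique codegree of multiplicity $>1$ is $d_0=4$, every other codegree has multiplicity one, and $k\ge 2$. Moreover, $G$ is not a $p$-group: by Lemma \ref{nilpotent} the only non-abelian $p$-group $T'_k$-groups are extra-special $2$-groups, whose commutator quotient is elementary abelian rather than cyclic of order $4$.

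The central reduction is to show that $G'=N$ is a minimal normal subgroup of $G$. Let $K$ be any minimal normal subgroup. If $K\not\le G'$, minimality forces $K\cap G'=1$ and $K\hookrightarrow G/G'\cong C_4$, so $K\cong C_2\le \mathbf{Z}(G)$. Then $(G/K)/(G/K)'\cong C_2$, and Lemma \ref{s3} classifies $G/K$ as $S_4$, $D_{18}$, or $C_p^\beta\rtimes C_2$ (including $S_3$). Enumerating central $C_2$-extensions whose commutator quotient equals $C_4$, the split ones give $G/G'\cong C_2\times C_2$, so only the non-split dicyclic-type extensions $\mathrm{Dic}_{4p}$ survive; a direct character-table computation (illustrated by $\mathrm{Dic}_{12}, \mathrm{Dic}_{20}, \mathrm{Dic}_{36}$) shows that only $\mathrm{Dic}_{12}$ satisfies the $T'_k$-condition, the others exhibiting at least two distinct codegrees of multiplicity $\ge 2$. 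Hence $G\cong \mathrm{Dic}_{12}$ in this branch. If instead $K\le G'$ with $K<G'$, then $G/K$ is a solvable $T'_l$-group with $(G/K)/(G/K)'\cong C_4$ and $|G/K|<|G|$, so by minimality $G/K\cong \Suz(2)$ or $G/K\cong \mathrm{Dic}_{12}$. I would reconstruct $G$ via Schur--Zassenhaus (when the orders permit) and use Lemmas \ref{small}(b),(c) together with \ref{small2}(b) to constrain the action on $K$; a codegree book-keeping argument then produces a second codegree of multiplicity $\ge 2$ among the characters above $K$, contradicting $T'_k$. So $K=G'$, i.e., $G'$ is minimal normal.

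Since $G$ is not a $2$-group, $N=G'$ is elementary abelian of order $q^a$ with $q$ odd, and $G=N\rtimes\langle x\rangle$ with $\langle x\rangle\cong C_4$. The subgroup $C_N(x^2)$ is trivially $N$-invariant and (since $\langle x\rangle$ commutes with $x^2$) is $\langle x\rangle$-invariant, hence $G$-invariant; by minimality of $N$, $C_N(x^2)\in\{0,N\}$. If $C_N(x^2)=N$, then $x^2$ acts trivially on $N$, and the hypothesis $G'=[N,x]=N$ forces $C_N(x)=0$, so $x$ acts on $N$ as inversion. Clifford's theorem yields $(q^a-1)/2$ orbits of size $2$ on $\Irr(N)$, each contributing two irreducible characters of $G$ of degree $2$ with codegrees $q$ and $2q$ respectively; thus $m'_G(q)=m'_G(2q)=(q^a-1)/2$, and the $T'_k$-condition forces $q^a=3$, giving $G\cong \mathrm{Dic}_{12}$. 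If $C_N(x^2)=0$, every nontrivial power of $x$ acts fixed-point-freely on $N$, so $G$ is Frobenius with kernel $N$ and complement $C_4$; its $(q^a-1)/4$ faithful induced characters of degree $4$ share the single codegree $q^a$, and the $T'_k$-condition forces $q^a=5$, giving $G\cong \Suz(2)$. The main obstacle lies in the second reduction branch, where ruling out each possible extension of $\Suz(2)$ or $\mathrm{Dic}_{12}$ by a nontrivial minimal normal $K$ requires a careful codegree count to locate a second repeated codegree.
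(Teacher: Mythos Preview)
Your overall strategy is the same as the paper's---reduce to the case where $G'$ is a minimal normal subgroup and then classify---and your endgame is actually cleaner: once $N=G'$ is minimal normal of odd order $q^a$, your dichotomy on $C_N(x^2)$ and the ensuing Clifford count are correct and avoid the paper's appeal to the $D_0$-group classification and to Lemma~\ref{key1}.

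Branch~1, however, is an unnecessary detour: since $G'\neq 1$ is normal you may always choose your minimal normal $K$ inside $G'$, so the only dichotomy is $K<G'$ versus $K=G'$. (As written, your central-extension casework in branch~1 also does not fully dispose of the $S_4$ or $C_p^\beta\rtimes C_2$, $\beta>1$, possibilities for $G/K$.)

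The genuine gap is branch~2, which you flag but do not fill. Lemma~\ref{small2}(b) requires two hypotheses on every $\chi\in\Irr(G|K)$ that do not come for free: (i) $\ker(\chi)=1$ and (ii) $\cod(\chi)\neq 4$. The paper spends most of its effort here. For (i), a nontrivial kernel of even order is excluded by passing to a suitable quotient and invoking Lemma~\ref{s3}; an odd-order kernel forces $G'\cong K\times T$ with $T\in\{C_3,C_5\}$, and comparing $G/T$ with $G/K$ (each isomorphic to $\Suz(2)$ or $\mathrm{Dic}_{12}$) manufactures a second repeated codegree. For (ii), one writes $4=|K|\cdot |I_G(\lambda)|/(|K|\theta(1))$ via Lemma~\ref{small}(a), uses that all Sylow subgroups of $G/K$ are cyclic so $\lambda$ extends to $I_G(\lambda)$ (this, not Schur--Zassenhaus, is the relevant extension fact), and rules out the few surviving parameter sets with the orbit bound $|G:I_G(\lambda)|\le |K|-1$. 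Only then do the characters above $K$ have distinct codegrees, and Gallagher forces $I_G(\lambda)=K$, i.e.\ a Frobenius action of $G/K$ on $K$. The $\Suz(2)$ case dies because a Frobenius group is never a Frobenius complement; for $\mathrm{Dic}_{12}$ the paper invokes the $D_1$-group classification, though a direct codegree count of the kind you propose would also work here (one gets $12\mid |K|-1$ with $(|K|,6)=1$, so $|K|\ge 13$, and $|K|=13$ fails since $\mathrm{Dic}_{12}$ is non-abelian and cannot embed in $\Aut(C_{13})\cong C_{12}$).
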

\begin{proof}
	Assume $K$ is a normal subgroup of $G$ contained in  $G'$ such that $G'/K$ is a chief factor of $G$. Then,  write $\overline G\cong G/K$ and adopt the bar convention for this quotient group. First assume $\ker(\chi)\not =1$, for some $\chi\in \Irr(\overline G|\overline G')$, then   $\ker(\chi)\cong C_2$ or $C_4$. If $\ker(\chi)\cong C_4$, then $\overline G\cong \ker(\chi)\times \overline G'$ is abelian and we get a contradiction, hence $\ker(\chi)\cong C_2$. If $\overline G'$ is a $2$-group,  we get a contradiction, as  in this case $C_2\cong \overline G'\leq {\bf Z}(\overline G)$ and $\overline G/\overline G'\cong C_4$, which is not possible. 	
	So,  $\overline G'$ is an elementary abelian $p$-group, for some odd prime p, and by Lemma \ref{key1}, we get that $\overline G/\ker(\chi)\cong D_{2p}$. Therefore, $m'_{ G}(4)>1$ and $m'_{G}(p)=(p-1)/2$.  Hence,  $p=3$ and  $\overline G\cong$ Dic$_{12}$.

	Now, assume $\ker(\chi)=1$, for all $\chi\in \Irr(\overline G|\overline G')$.  Now, we claim that $\cod(\chi)\not= 4$, for all $\chi\in \Irr(\overline G| \overline G')$. Suppose false, then $|\overline G'|\times |I_G(\lambda)|/(\theta(1)|\overline G'|)=4$, for some $\lambda\in \Irr(\overline G')$ and $\theta\in \Irr(I_{\overline G}(\lambda)|\lambda)$, by Lemma \ref{small}(a). Therefore, $\overline G'$ is a $2$-group and hence $\overline G'\leq {\bf Z}(\overline G)$,  a contradiction. So, $\cod(\chi)\not =4$, for all $\chi\in\Irr(\overline G|\overline G')$. Therefore, members of  $\Irr(\overline G|\overline G')$ must have distinct degrees and hence $\overline G$ is a $D_0$-group. Using the classification of $D_0$-group, we get that $\overline G\cong $Suz$(2)$.

	Now, we  aim to prove that $K$ is trivial. On the contrary, we may assume $K$ is a minimal normal subgroup of $G$.
	
	First,  we show that $\ker(\chi)=1$ for all $\chi\in \Irr(G|K)$.
	Let $\overline G\cong $Dic$_{12}$,  assume $|\ker(\chi)|$ is divided by $2$, for some $\chi\in \Irr(G|K)$ and let $P\in \Syl_2(K)$. By the structure of Dic$_{12}$, we have $P\cong C_2$.
	Now,  $G/P$ satisfies Lemma \ref{s3}, and it follows that $G/P\in \{S_4, D_{18}, C_3^2\rtimes C_2\}$, and hence $m'_{G}(t)>1$, for some $t\not =4$, a contradiction. So $|\ker(\chi)|$ is not divided by $2$. Paying attention to  the structure of Dic$_{12}$ and Suz$(2)$  we have 
	$G'\cong K\times T $, where $T=\ker(\chi)\cap G'\cong C_5$ or $C_3$ and applying the above discussion on $G/T$, we get that $G/K\cong $Suz$(2)$ or Dic$_{12}$,  and so $T\cong C_5$ or $C_3$.  If $G/T\cong G/K$, then $m'_G(t)>1$ for some $t\in \{6,5\}$(both $G/K$ and $G/T$ are isomorphic to Suz$(2)$ or Dic$_{12}$) and, as $m'_G(4)>1$, we get  a contradiction. Therefore one of those groups is isomorphic to Dic$_{12}$ and the other one is isomorphic to Suz$(2)$. So $G'\cong C_{15}$, has $8$  irreducible characters of order 15, whose inertia subgroups are $G'$. Therefore, $G$ has $2$ distinct irreducible characters of the same degree $4$. It is easy to see that both of those characters are faithful and so $m'_G(15)>1$,  a contradiction.   So $\ker(\chi)=1$, for all $\chi\in \Irr(G|K)$.
	
	We claim that $\cod(\chi)\not =4$, for all $\chi\in \Irr(G|K)$. Otherwise, $|K|\times |I_G(\lambda)|/(\theta(1)|K|)=4$, for some $\lambda\in \Irr(K)$ and $\theta\in \Irr(I_G(\lambda)|\lambda)$, by Lemma \ref{small}(a). Since all Sylow subgroups of $G/K$ are cyclic, then $\lambda$ extends to $I_G(\lambda)$ (see \cite[Corollary 11.31]{isaac}), and so $\theta(1)$ is a $2$-element (by noticing the character degrees  of Dic$_{12}$ and Suz$(2)$). Hence,  both $|K|$ and $|I_G(\lambda)|/|K|$ are $2$-groups, whose orders are at most 4 and then,  $3\leq |G:I_G(\lambda)|$,  while $|K|\leq 4$. By Remark \ref{orbit}, the only possibility is that $|K|=4$ and $|G:I_G(\lambda)|=3$, which means that $G/K\cong$Dic$_{12}$,  $I_G(\lambda)/K\cong C_4$ and $\theta(1)=4$, a contradiction.  So our claim is proved.
	
	Therefore,  the members of $\Irr(G|K)$ has distinct degrees (as $m'_G(4)>1$). Since all Sylow subgroups of  $I_G(\lambda)/K$ are cyclic, then $\lambda$ is extendible to $I_G(\lambda)$ (see \cite[Corollary 11.31]{isaac}),  and using Gallagher's theorem, we deduce that $I_G(\lambda)/K=1$, for all $\lambda \in \Irr(K)$.  It follows that $G/K$ acts on $K$, Frobeniusly, but it is  a contradiction, when $G/K\cong $Suz$(2)$. Then we may assume  $G/K\cong$Dic$_{12}$. Then,  $\cd(G|K)=\{12\}$ and so  $\cd(G|K)\cap\cd(G/K)=\emptyset$. On the other hand $G/K$ is a $D_1$-group, which implies that $G$ is a $D_1$-group. Now, looking at the structure of $D_1$-groups  (see \cite{d1}), we get a contradiction.  	
\end{proof}
{\bf Proof of the Main Theorem case (b):} Easy calculation shows us that groups in cases (1) and (2) are $T'_k$-groups, for some $k\leq 3$. Also by Lemma \ref{abeliana}, elementary abelian group $G$ is a  $T_{|G|-1}'$-group. If $G$ is an extra-special $2$-group, since it has a unique nonlinear irreducible character and all of nontrivial linear characters have the same codegrees, we get that $G$ is a $T_{|G/G'|-1}'$-group. Therefore the described groups in the case (3) are  $T'_{|G/G'|-1}$-groups.  

Let  $G$ be a Frobenius group isomorphic to $C_2^{\beta}\rtimes C_{2^{\beta}-1}$, where $\beta$ is an integer and $p=2^{\beta}-1$ is  prime. Then, $G$ has a unique nonlinear irreducible character and all of its nontrivial linear irreducible characters have the same codegree $p$. Therefore $G$ is a $T_{(p-1)}'$-group.

Let $G$ be a Frobenius group isomorphic to $C_p^{\beta}\rtimes C_2$, for some prime $p$ and  integer $\beta$. We know that $\lambda^G\in \Irr(G)$ for every $\lambda\in \Irr(G')$ and $|\ker(\lambda^G)|=p^{\beta-1}$. Since the size of  $G$-orbit of each nontrivial element in $\Irr(G')$ is $2$, then $G$ has exactly $(p^{\beta}-1)/2$ nonlinear irreducible characters with the same codegree $p$. 	 As $G$ has just one linear nontrivial irreducible character, we deduce that $G$ is a $T_{(p^{\beta}-1)/2}'$-group.

Now,  we just need to prove the "only if" part of the theorem.  If $G$ is abelian, by Lemma \ref{abeliana}, we have either $G$ is an elementary abelian $p$-group, for some prime $p$,  or  $G\cong C_4$. So we assume $G$ is not abelian. As $G/G'$ is a $T_l'$-group, for some integer $l$, we deduce that $G/G'$ is a cyclic group of order $4$ or an elementary abelian group.  First, assume $G/G'\cong C_p^{\alpha}$, for some prime $p$ and integer $\alpha$. Let $K$ be a normal subgroup of $G$, contained in $G'$, such that  $G'/K$ is a chief factor of $G$. If $G'/K$ is not a $p$-group, then by Lemma \ref{key1}, we have $G/K$ is one of the groups described in the statement of Lemma \ref{key1}. First, let $G/K$ be  isomorphic to a Frobenius group of order $p2^{\beta}$, where  $p=2^{\beta}-1$. Then applying Lemma \ref{key3}, we have $K=1$ and $G$ is  the group described in case (4) of the main theorem. If $G/K\cong  C_2\times S_3$, applying Lemma \ref{c2s3}, $K=1$ and $G$ is one of the groups, described in case (2) of the main theorem. If $G/K\cong D_{2q}$, for some odd prime $q$, then applying Lemma \ref{s3}, either $G\cong$$S_4$, $D_{18}$ or it is isomorphic to the Frobenius group described in case (5) of the main theorem. Now, let $G'/K$ be a $p$-group. Then, applying Lemma \ref{lastone}, $G$ is one of the groups described in cases (2) and (3). At last, suppose $G/G'\cong C_4$. Hence, applying Lemma \ref{suz}, $G\cong $Suz$(2)$ or Dic$_{12}$, which are two of the groups in case (1) of the main theorem. $\Box$


\end{document}